\newtheorem{thm}{Theorem}[section]
\newtheorem{prop}[thm]{Proposition}
\newtheorem{lemma}[thm]{Lemma}
\newtheorem{corollary}[thm]{Corollary}
\newtheorem{assumption}[thm]{Assumption}
\theoremstyle{definition}
\newtheorem{example}[thm]{Example}
\theoremstyle{remark}
\newtheorem{remark}[thm]{\bf Remark}
\numberwithin{equation}{section}
\renewcommand*\env@cases[1][1.6]{%
  \let\@ifnextchar\new@ifnextchar
  \left\lbrace
  \def\arraystretch{#1}%
  \array{@{}l@{\quad}l@{}}%
}
\newcommand{\norm}[1]{\mbox{$\left\| #1 \right\|$}}
\newcommand{\sprod}[2]{\mbox{$\left\langle {\,#1},{\, #2} \right\rangle$}}
\newcommand{\AM}{\mathrm{A}}
\newcommand{\eps}{\varepsilon}
\newcommand{\F}{\mathcal{F}}
\newcommand{\N}{\mathbb{N}}
\newcommand{\PP}{\mathbb{P}}
\newcommand{\R}{\mathbb{R}}
\newcommand{\RR}{\mathcal{R}}
\newcommand{\df}{\ \mathrm{d}}
\newcommand{\menge}[2]{\left\{ \, {#1}  \ \big{|} \ {#2} \, \right\}}
\renewcommand{\exp}[1]{\mathrm{e}^{#1}}
\renewcommand{\det}[1]{\mathrm{det}\left({#1} \right)}
\newcommand{\trace}[1]{\mathrm{Tr}{\left(#1\right)}}
\newcommand{\fourier}{ \mathcal{F}_{3}}
\newcommand{\lam}{\lambda_{j}(\Omega)}
\newcommand{\magnet}{\mathrm{\textbf{A}}}
\begin{document}

\title{Melas-type bounds for the Heisenberg Laplacian on bounded domains}


\author{Hynek Kova\v r\'{\i}k}
\address{Hynek Kova\v r\'{\i}k, DICATAM, Sezione di Matematica, Universit\`a degli studi di Brescia,
Via Branze, 38 - 25123  Brescia, Italy}
\email{hynek.kovarik@unibs.it}

\author{Bartosch Ruszkowski}
\address{Bartosch Ruszkowski, Institute of Analysis, Dynamics
and Modeling, Universit\"at Stuttgart, 
PF 80 11 40, D-70569  Stuttgart, Germany}
\email{Bartosch.Ruszkowski@mathematik.uni-stuttgart.de}

\author{Timo Weidl}
\address{Timo Weidl, Institute of Analysis, Dynamics
and Modeling, Universit\"at Stuttgart, 
PF 80 11 40, D-70569  Stuttgart, Germany}
\email{weidl@mathematik.uni-stuttgart.de}

\maketitle

{\bf  Keywords:}
Heisenberg Laplacian, Berezin-Li-Yau inequality  

\begin{abstract}
We study Riesz means of the eigenvalues of
the Heisenberg Laplacian with Dirichlet boundary conditions on bounded domains in $\R^3$. 
We obtain an inequality with a sharp leading term and an additional lower order term, improving the result of Hanson and Laptev, \cite{hanson}. 
\end{abstract}

\section{\bf Introduction} \label{sec-intro}

Let $\Omega \subset \R^3$ be an open bounded domain.
In this paper we consider the Heisenberg Laplacian on $L^2(\Omega)$ with Dirichlet boundary condition formally given by
\begin{align*}
\mathrm{A}(\Omega)\ := \ -X_{1}^{2}-X^{2}_{2}\, ,
\end{align*}
where
\begin{equation} \label{X12}
X_{1} \ :=  \ \partial_{x_{1}}+\frac{x_2}{2}\, \partial_{x_{3}}, \quad X_{2} \ := \ \partial_{x_{2}}-\frac{x_1}{2}\, \partial_{x_{3}}\, .
\end{equation}
More precisely, $\mathrm{A}(\Omega)$ is the unique self-adjoint operator associated with the closure of the quadratic form 
\begin{align}\label{einseins}
\mathrm{ {a}}[u]& \ :=  \ \int_{\Omega} \left( \left |X_{1}\, u(x)\right|^{2}+\left|X_{2}\, u(x) \right|^{2} \right) \df x,
\end{align}
initially defined for $u \in C_{0}^{\infty}(\Omega)$. Note that 
\begin{align*}
 [X_2,X_1]= \partial_{x_3} =: X_{3}\, .
\end{align*}
We recall that the vector fields $X_1, X_2, X_3$ form a basis of the Lie algebra of left-invariant vector fields on the first Heisenberg group $\mathbb{H}$  given by $\R^3$ and equipped with the group law
\begin{equation} \label{group-law}
(x_1,x_2,x_3)\boxplus (y_1,y_2,y_3) \ := \ \Big{ (x_1+y_1,x_2+y_2, x_3+y_3- \frac{1}{2}\, (x_1 y_2-x_2 y_1)\Big )}.
\end{equation}
The sub-elliptic  estimate proved in \cite{follandsub} shows that 
\begin{equation} \label{eq-folland}
\|u\|^2_{H^{1/2}(\Omega)} \, \leq \, c\, \left(a[u]+ \|u\|_{L^2(\Omega)}^2\right), \qquad u\in C_0^\infty(\Omega)
\end{equation}
holds for some $c>0$. Hence the domain of the closure of $a[\cdot]$ is continuously imbedded in $H_0^{1/2}(\Omega)$. Since the imbedding $H_0^{1/2}(\Omega) \to L^2(\Omega)$ is compact, by standard Sobolev imbedding theorems, it follows that the spectrum of $\mathrm{A}(\Omega)$ is purely discrete. 
We denote by $\{\lambda_k\}_{k\in \N}$ the non-decreasing sequence of the eigenvalues of $\mathrm{A}(\Omega)$ and by  $\{v_j \}_{k\in \N}$ the associated sequence of normalized eigenfunctions;
\begin{equation} \label{eq-eigen}
\mathrm{A}(\Omega) \, v_j \ = \ \lambda_{j} \,  v_{j}, \qquad \norm{v_{j}}_{L^{2}(\Omega)}=1.
\end{equation}
Recently Hanson and Laptev proved, see \cite[Thm.~2.1]{hanson}, that 
\begin{equation}  \label{eq-hl}
\trace{\, \mathrm{A}(\Omega)-\lambda}_{-}  = \sum_{k\in \N} \left(\lambda-\lambda_k \right)_+  \ \leq  \ \frac{|\Omega|}{96}\,  \lambda^{3} \qquad \forall\ \lambda >0.
\end{equation}
Here  the eigenvalues $\lambda_k$ are repeated according to their multiplicities and $|\Omega|$ denotes the Euclidean volume of $\Omega$.  Moreover, it is also shown in \cite{hanson} that the constant $\frac{1}{96}$ on the right hand-side of \eqref{eq-hl} is sharp.  Indeed, this follows from the asymptotic equation
\begin{equation}  \label{eq-hl-asymp}
\lim_{\lambda\to\infty} \lambda^{-3}\  \trace{\, \mathrm{A}(\Omega)-\lambda}_{-}  = \frac{|\Omega|}{96}\, ,
\end{equation}
see \cite[Cor.~2.8]{hanson}. 

\smallskip

\noindent The aim of this paper is to improve \eqref{eq-hl}  by adding to its right hand-side a negative term of a lower order in $\lambda$. In other words, we are going to show that 
\begin{equation}  \label{to-show}
\trace{\, \mathrm{A}(\Omega) -\lambda}_{-}    \ \leq  \ \frac{|\Omega|}{96}\,  \lambda^{3} - \mathcal{C}(\Omega)\, \lambda^\alpha,
\end{equation}
where $\mathcal{C}(\Omega)$ is a positive constant which depends only on $\Omega$ and $\alpha\in (0,3)$. In our main result, see Theorem \ref{dreieins}, we will prove inequality \eqref{to-show} with $\alpha =2$ and give an explicit expression for the constant $\mathcal{C}(\Omega)$.

This is in the spirit of Melas-type improvements of the well-known Berezin-Li-Yau inequality 
\begin{equation} \label{bly-eq}
\trace {\, -\Delta_\Omega-\lambda}_{-} \ \leq \ \frac{|\Omega|\,}{(4\pi)^{\frac d2}\, \Gamma\left(2+\frac d2\right)} \  \lambda^{1+\frac d2}\, , \qquad \Omega \subset \R^d,
\end{equation}
where $-\Delta_\Omega$ denotes the Dirichlet Laplacian on $\Omega$, see \cite{berezin,li} and \cite{melas,kovarik,kvw,yolcu,yolcu2}. In particular, our main result improves inequality \eqref{eq-hl} in a similar way in which \cite{kovarik} improves inequality \eqref{bly-eq}. 

However, the method that we employ in the present paper is different from the one used in \cite{kovarik} since it does not rely on a Hardy inequality involving the distance to the boundary. In fact, as far as we know an analog of such an inequality for the Heisenberg-Laplacian with explicit constants is not known.
Instead we exploit the properties of the Carnot-Carath\'{e}odory metric which is connected to the Heisenberg-Laplacian in a natural way, see sections \ref{sec-prelim} and \ref{erstesection} for details. 

The paper is organized as follows. The main result is announced in section \ref{sec-result}. In section \ref{erstesection}, and in particular in Theorem \ref{mainnn},  we present some auxiliary results concerning the properties of balls with respect to the  Carnot-Carath\'{e}odory metric. The proof of the main result is given in section \ref{sec-proof}. In the closing section \ref{sec-improvement} we establish an improvement of Theorem \ref{dreieins}, which reveals a better order of $\lambda$ in the additional term. However for this result we need the additional Assumption that the Hardy inequality respectively the Carnot-Carath\'{e}odory metric must be valid.

\section{\bf Main results}\label{sec-result}

\subsection{Preliminaries} \label{sec-prelim} For a fixed point $x\in \mathbb{H}$ we denote its Euclidean norm by $\norm{x}_{e}$. 
Now we introduce the \textit{\bf Carnot-Carath\'{e}odory} metric. We call a Lipschitz curve $c: [a,b]\subset \mathbb{R} \to \mathbb{H}$ 
\textit{horizontal} if $c(t):=(x_1(t),x_2(t), x_3(t))$  fulfills the following differential equation
\begin{align}\label{horiz}
x_3^{\prime}(t)= \frac{1}{2}\left(x_2(t)\, x_1^\prime(t)-x_1(t)\, x_2^\prime(t) \right)  .
\end{align}
Horizontal curves always exist because the Heisenberg group is a step two Carnot group and we can apply the Chow's theorem, se e.g.~\cite{montgomery}. For a given pair $x,y \in \mathbb{H}$ we introduce the family of curves 
\begin{equation}
\F(x,y) := \left\{ c: [a,b] \to \mathbb{H}\, :\, c \ {\rm is\, horizontal \, and \, connects}\  x\  {\rm with} \ y\right\}.   
\end{equation}
Furthermore, we set 
\begin{align}
 l_{\mathbb{H}}(c):= \int_{a}^{b}\sqrt{x_1^{\prime}(t)^{2}+x_2^\prime(t)^{2} }\ \df t.
\end{align}
Given $x,y\in \mathbb{H}$, the Carnot-Carath\'{e}odory metric (C-C metric in the sequel) is then defined as follows; 
\begin{equation} \label{CC-def}
d(x,y) : = \inf_{c \in \F(x,y)}  l_{\mathbb{H}}(c)
\end{equation}
For a more detailed introduction to this metric we refer to \cite{capogna}, \cite{montgomery}, \cite{calin}. Let
$$
B_r(0) = \left\{ x \in \mathbb{H}\, : \, d(x,0) < r \right\}
$$
be the ball with radius $r>0$, with respect to the C-C metric, centered at the origin.
Let us introduce the distance from a fixed point $x\in \Omega$ to the boundary of $\Omega$ with respect to the C-C metric, which will be denoted by
\begin{align}
d(x):= \inf\limits_{y\in \partial \Omega}d(x,y).
\end{align}
When needed, we extend the function $d(\cdot)$ on $\mathbb{H}$;  for points lying in $x\in \Omega^{c}$ we set $d(x)=0$. 
In addition we introduce the in-radius of $\Omega$, which is defined by
\begin{align} \label{in-rad-cc}
\mathrm{R}(\Omega):= \sup\limits_{x\in \Omega} d(x)\, .
\end{align}

\subsection{Main result}
With the above notation at hand we can state our main result. 

\begin{thm}\label{dreieins}
Let $\Omega\subset \mathbb{H}$ be a bounded domain. Then 
\begin{align}\label{hauptgleichung}
\trace{\mathrm{A}(\Omega) -\lambda}_{-} \ \leq  \ \max \left\{0,\  \frac{|\Omega|}{96} \ \lambda^{3}- \frac{|B_1(0)|^2\, \mathrm{R}(\Omega)^6}{3 \cdot 2^9\, |\Omega|}\ \lambda^{2} \right\} 
\end{align}
holds true for all $\lambda>0$. 
\end{thm}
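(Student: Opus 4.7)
My plan is to mirror the Melas-type strategy, transported from the Euclidean setting to the sub-Riemannian one. The leading term $|\Omega|\lambda^{3}/96$ in \eqref{eq-hl} comes from a Berezin-type inequality using the coherent-state representation available after a partial Fourier transform in $x_{3}$ (with dual variable $\tau$): the operator $A(\Omega)$ decomposes as a $\tau$-family of two-dimensional magnetic Laplacians of constant field $\tau$, whose Landau levels $(2k+1)|\tau|$ have density $|\tau|/(2\pi)$ per unit area, and the summation in $k$ and integration in $\tau$ reproduce the constant $1/96$. To gain an extra $\lambda^{2}$ term I need to refine the pointwise estimate $\sum_{j}|\langle v_{j},g\rangle|^{2}\leq (\mathrm{const})\cdot|\Omega|$, where $g$ runs over the relevant coherent states, by a sharper version that loses strictly less whenever $g$ is localised far from the C-C ball $B_{R(\Omega)}(x_{0})$ inscribed in $\Omega$.

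The first step is to translate so that the inscribed ball is centred at the origin: by left-invariance of $A$ under the group law \eqref{group-law}, I may assume $B_{R}(0)\subset\Omega$ with $R=R(\Omega)$. Next, I would derive a refined coherent-state bound schematically of the form
\begin{equation*}
\trace{A(\Omega)-\lambda}_{-}\ \leq\ \frac{|\Omega|\,\lambda^{3}}{96}\ -\ \Phi(\lambda,R)\,\frac{\bigl(\int_{B_{R}(0)} w\bigr)^{2}}{\int_{\Omega} w^{2}}\,,
\end{equation*}
where $\Phi$ packages the $(k,\tau)$ integrals coming from the Landau decomposition and the Cauchy-Schwarz quotient on the right explains the appearance of $|\Omega|$ in the denominator of the announced constant $\mathcal{C}(\Omega)$. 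The homogeneity $|B_{r}(0)|=r^{4}|B_{1}(0)|$ together with the C-C ball estimates in Theorem \ref{mainnn} then permit an explicit computation of the geometric ratio, and one can recognise $|B_{1}(0)|^{2}R^{6}/|\Omega|$ via the algebraic identity $|B_{1}(0)|^{2}R^{6}=|B_{R}(0)|^{2}/R^{2}$.

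The principal obstacle will be producing the refined Berezin-type inequality itself. In the Euclidean Melas argument one exploits a Lipschitz-in-$\xi$ estimate for $\widehat{\chi_{\Omega}v_{j}}$ whose constant involves the second Euclidean moment $\int_{\Omega}|x-x_{0}|^{2}|v_{j}(x)|^{2}\,dx$; the Heisenberg analogue should replace this by a second moment in the C-C metric, $\int_{\Omega}d(x,x_{0})^{2}|v_{j}(x)|^{2}\,dx$, and it has to interact correctly both with the left-translation invariance of $A$ and with the Landau-level projections in the $(\tau,k)$ picture. This is where the specific structure of C-C balls enters, via Theorem \ref{mainnn}. Once the uncertainty-type ingredient is in place, combining it with the Landau-level book-keeping yields the announced estimate after optimising in the auxiliary parameters hidden inside $\Phi$. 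The $\max\{0,\cdot\}$ in the theorem is automatic because $\trace{A(\Omega)-\lambda}_{-}$ is non-negative.
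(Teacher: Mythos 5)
Your outline reproduces the paper's machinery for the \emph{leading} term (partial Fourier transform in $x_3$, Landau levels $|\xi_3|(2k-1)$ with density $|\xi_3|/2\pi$, summation giving $1/96$, left-translation to centre the inscribed C-C ball, and the trivial $\max\{0,\cdot\}$ remark), but the actual content of the theorem --- the negative $\lambda^{2}$ term --- is precisely the step you leave open, and the route you sketch for it is not the one that works here and is not carried to the point of being checkable. In the paper the correction does \emph{not} come from a Melas-style refinement of the coherent-state overlap via a second moment. It comes from keeping the term that the Berezin argument normally discards, namely $\mathrm{Q}(\lambda,k,\xi_3)=\sum_{j:\lambda_j\geq\lambda}\|f_{j,k,\xi_3}\|^2$, and bounding it from below on a thin boundary strip: one restricts to a set $E^{\beta}\subseteq\Omega^{\beta}=\{x\in\Omega: d(x)<\beta\}$ parametrized by geodesic polar coordinates, uses $|z-w|^2\geq\tfrac12|z|^2-|w|^2$, the strip-volume bound $|\Omega^{\beta}|\geq\beta\,\mathrm{R}(\Omega)^3|B_1(0)|$ from Theorem \ref{mainnn}/Corollary \ref{corstrip}, and a Poincar\'e-type inequality $\int_{E^{\beta}}|u|^2\leq\beta^2\int_\Omega(|X_1u|^2+|X_2u|^2)$ proved along the geodesic rays via a weighted Hardy inequality; optimizing $\beta\sim\lambda^{-1}$ then yields the constant $|B_{\mathrm{R}(\Omega)}(0)|^2/(3\cdot 2^9\mathrm{R}(\Omega)^2|\Omega|)=|B_1(0)|^2\mathrm{R}(\Omega)^6/(3\cdot2^9|\Omega|)$. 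None of these ingredients appears in your plan.

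The replacement you propose --- a Heisenberg analogue of the Melas second-moment/Lipschitz estimate, with $\int_\Omega d(x,x_0)^2|v_j|^2\,dx$ in place of the Euclidean moment of inertia --- is exactly what you concede is the ``principal obstacle,'' and there are concrete reasons to doubt it delivers \eqref{hauptgleichung}. First, after the partial Fourier transform the coherent states are the Landau projection kernels $\mathrm{P}_{k,\xi_3}(x',\cdot)\,\exp{-\mathrm{i}\xi_3\cdot}$, not plane waves, so the Euclidean argument (differentiating $\widehat{\chi_\Omega v_j}$ in the dual variable and using the moment to control the Lipschitz constant) has no direct counterpart; you give no indication of how a moment bound would interact with the infinitely degenerate projections $\mathrm{P}_{k,\xi_3}$. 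Second, even if such a refinement were established, its natural output is a constant involving a C-C moment of inertia about an interior point, whereas the theorem's constant is governed by the in-radius $\mathrm{R}(\Omega)$; Theorem \ref{mainnn}, which you invoke to bridge this, concerns the volume of the strip $\{x\in\Omega: d(x)<\beta\}$ measured from the \emph{boundary}, not distances to a point, so it does not perform that conversion. Finally, the schematic inequality with the unspecified weight $w$, the quotient $\bigl(\int_{B_R}w\bigr)^2/\int_\Omega w^2$ and the unspecified $\Phi(\lambda,R)$ is not derived, so as written the proposal contains the correct leading-order bookkeeping but no proof of the remainder term.
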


\begin{remark}
Equation \eqref{eq-hl-asymp} implies that 
\begin{equation} \label{asymp-hl}
\trace{\mathrm{A}(\Omega) -\lambda}_{-} =  \frac{|\Omega|}{96} \ \lambda^{3} + o(\lambda^3) \qquad \lambda\to\infty.
\end{equation}
So far the order of the remainder term in \eqref{asymp-hl} is not known.
\end{remark}

\noindent The upper bound \eqref{eq-hl} is equivalent, by means of the Legendre transform, to the Li-Yau type lower bound 
\begin{equation} \label{hl-li-yau}
\sum_{j=1}^{n} \lambda_{j}(\Omega) \ \geq \ \frac{8\sqrt{2}}{3}|\Omega|^{-\frac 12} \ n^{\frac 32} \qquad n\in \N,
\end{equation}
see \cite[Cor.~2.10]{hanson}.
In the same way Theorem \ref{dreieins} implies an improvement of \eqref{hl-li-yau}:

\begin{corollary} 
For any $n\in\N$ it holds
\begin{align}
 \sum_{j=1}^{n} \lambda_{j}(\Omega) \ \geq \ \frac{8\sqrt{2}}{3}|\Omega|^{-\frac 12} \ n^{\frac 32}+ \frac{1}{48} \frac{ |B_1(0)|^2\, \mathrm{R}(\Omega)^6}{|\Omega|^{2}}\ n\, .
\end{align}
\end{corollary}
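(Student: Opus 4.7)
The plan is to derive the corollary from Theorem~\ref{dreieins} by a standard Legendre-transform duality, exactly as \eqref{eq-hl} produces \eqref{hl-li-yau}; the only subtlety is the $\max\{0,\cdot\}$ appearing in \eqref{hauptgleichung}, which forces a case split for small $n$. Throughout, set $C:=\frac{|B_1(0)|^2\,\mathrm{R}(\Omega)^6}{3\cdot 2^9\,|\Omega|}$, so that \eqref{hauptgleichung} reads $\sum_{k}(\lambda-\lambda_k)_+\leq \max\{0,\tfrac{|\Omega|}{96}\lambda^3-C\lambda^2\}$.

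First I would use the elementary bound $\lambda-\lambda_j\leq (\lambda-\lambda_j)_+$ summed for $j=1,\dots,n$ together with Theorem~\ref{dreieins} to obtain, for every $\lambda>0$,
\[
\sum_{j=1}^n\lambda_j \;\geq\; n\lambda \,-\, \sum_{k\in\N}(\lambda-\lambda_k)_+ \;\geq\; n\lambda \,-\, \max\Big\{0,\;\tfrac{|\Omega|}{96}\lambda^3-C\lambda^2\Big\}.
\]
I would then substitute the maximiser $\lambda^{\ast}:=\sqrt{32n/|\Omega|}$ of the leading-order expression $n\lambda-\tfrac{|\Omega|}{96}\lambda^3$. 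A direct computation gives $n\lambda^{\ast}-\tfrac{|\Omega|}{96}(\lambda^{\ast})^3=\tfrac{8\sqrt 2}{3}|\Omega|^{-1/2}n^{3/2}$ and $C(\lambda^{\ast})^2=\tfrac{32Cn}{|\Omega|}=\tfrac{1}{48}\tfrac{|B_1(0)|^2\,\mathrm{R}(\Omega)^6}{|\Omega|^2}\,n$. The algebraic condition $\lambda^{\ast}\geq 96C/|\Omega|$ is equivalent to $n\geq 288\,C^2/|\Omega|$, and under this hypothesis the $\max$ is realised by its second entry, yielding exactly the claimed inequality.

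For the complementary regime $n<288\,C^2/|\Omega|$ I would invoke an auxiliary consequence of Theorem~\ref{dreieins}: for every $\lambda<96C/|\Omega|$ the right-hand side of \eqref{hauptgleichung} collapses to $0$, which together with $\sum_k(\lambda-\lambda_k)_+\geq 0$ forces each $\lambda_k$ to satisfy $\lambda_k\geq\lambda$. Hence $\lambda_1\geq 96C/|\Omega|$, so $\sum_{j=1}^n\lambda_j\geq 96nC/|\Omega|$. It remains to verify $96nC/|\Omega|\geq\tfrac{8\sqrt 2}{3}|\Omega|^{-1/2}n^{3/2}+\tfrac{32Cn}{|\Omega|}$; after subtracting $32nC/|\Omega|$ this simplifies to $64C/|\Omega|\geq\tfrac{8\sqrt 2}{3}|\Omega|^{-1/2}n^{1/2}$, which is precisely the threshold $n\leq 288\,C^2/|\Omega|$. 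Combining the two regimes covers every $n\in\N$.

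The only real obstacle is the case split at $n=288\,C^2/|\Omega|$, where one must notice that the $\max\{0,\cdot\}$ structure of Theorem~\ref{dreieins} automatically yields a lower bound on $\lambda_1$; beyond that the argument is pure one-variable Legendre optimisation.
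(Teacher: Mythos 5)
Your proposal is correct and follows essentially the same route as the paper: it is the Legendre-transform duality of \eqref{legendre} applied to \eqref{hauptgleichung}, only carried out by hand with the explicit test point $\lambda^{\ast}=\sqrt{32n/|\Omega|}$ (your computations of the two resulting terms and of the threshold $n=288\,C^{2}/|\Omega|$ check out). The paper simply cites the duality principle and omits the optimisation; your case split for small $n$ (equivalently, choosing the test point $\lambda=96C/|\Omega|$, where the right-hand side of \eqref{hauptgleichung} vanishes) is a legitimate way to handle the $\max\{0,\cdot\}$ that the paper leaves implicit.
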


\begin{proof}
Let us recall that if $f,g:\R\to\R$ are two convex non-negative functions, then the implication
\begin{equation} \label{legendre}
f(x) \leq g(x), \ x\geq 0 \quad \Leftrightarrow  \quad g^{\star}(p) \leq f^{\star}(p), \ p\geq 0
\end{equation}
holds true, where $f^{\star}$ and $g^{\star}$ are Legendre transforms of $f$ and $g$ defined by
$$
f^{\star}(p) := \sup_{x\geq0} (px-f(x)), \qquad  g^{\star}(p) := \sup_{x\geq0} (px-g(x))\, .
$$
The claim now follows by applying \eqref{legendre} to \eqref{hauptgleichung} with $f(\lambda)= \trace{A(\Omega)-\lambda}_{-}$ and 
$$
g(\lambda) =  \max \left\{0,\  \frac{|\Omega|}{96} \ \lambda^{3}- \frac{|B_{\mathrm{R}(\Omega)}(0)|^{2}}{{3 \cdot 2^9\, \mathrm{R}(\Omega)}^{2}\, |\Omega|}\ \lambda^{2} \right\} \, .
$$
\end{proof}


\section{\bf Auxiliary results}
\label{erstesection}
\noindent The goal of this section is to prove a sharp lower bound on the Euclidean volume of the set
\begin{align}
 \Omega^{\beta}:= \menge{x\in \Omega}{d(x) < \beta},
\end{align}
for a given $\beta\in (0, R(\Omega))$. We start by stating some properties of the C-C metric which be needed later. 

\smallskip

\noindent The arc joining geodesics starting from the origin 
were computed in  \cite{montii} and \cite{marenich}. The parametrization of these arcs is given by
\begin{align}\label{geod}
\gamma_{k,\theta}(t):= 
\begin{cases}
x_1(t,k,\theta) =\displaystyle \frac{\cos(\theta)-\cos(kt+\theta)}{k},\\[0,3cm]
x_2(t,k,\theta)=\displaystyle \frac{\sin(kt+\theta)-\sin(\theta)}{k},\\[0,3cm]
\displaystyle x_3(t,k,\theta)=\frac{kt-\sin(kt)}{2k^{2}}\, ,
\end{cases}
\end{align}
where $t\in[0, \frac{2\pi}{{|k|}}]$ , $\theta \in [0,2\pi)$ and $k \in \mathbb{R}\setminus \{0\}$. This means that for the given point $\gamma_{k,\theta}(t)\in \mathbb{H}$  holds $d(\gamma_{k,\theta}(t),0)=t$.  We extend this formula to the case $k=0$ by taking the limit for $k\to 0$. This gives
\begin{align}
\gamma_{0,\theta}(t):= 
\begin{cases}
x_1(t,0,\theta) =\displaystyle t\sin(\theta),\\
x_2(t,0,\theta)=\displaystyle  t\cos(\theta),\\
\displaystyle x_3(t,0,\theta)=0.
\end{cases}
\end{align}
 Thus we obtain the arcs connecting the origin with points lying in $\{(x_1,x_2,x_{3}) \in \mathbb{H}\, |\, x_{3}=0\}$. Next we define the map
\begin{align}\label{formelphi}
\Phi(t,k,\theta):= \big(x_1(t,k,\theta),\, x_2(t,k,\theta),\, x_3(t,k,\theta)\big),
\end{align}
for $t\in[0, \frac{2\pi}{{|k|}}]$ , $\theta \in [0,2\pi)$, $k \in \mathbb{R}$.
The determinant of $\Phi$ is given by 
\begin{align}\label{detlemma}
 \det \Phi(t,k,\theta)= \frac{kt\sin(kt)-2(1-\cos(kt))}{k^{4}}\, ,
\end{align}
see \cite[S.161]{montii}.

\smallskip

\noindent For the proof of the following Proposition we refer to {\normalfont\cite{montii}},{\normalfont\cite{nagel}} and  {\normalfont\cite{monti}}.

\begin{prop}\label{prop} The following statements hold true. 
\begin{enumerate}
\item[a)]{Any two points in $\mathbb{H}$ can be connected by a
(not necessarily unique) geodesic.}

\smallskip

 \item[b)]{The C-C metric is invariant under left translation respectively the group law on $\mathbb{H}$, which means
 \begin{align}
 d(x,y)=d(z\boxplus x,z\boxplus y)
\end{align}
for $x,y,z\in \mathbb{H}$.}

\smallskip

 \item[c)]{The mapping 
 \begin{align}\label{hu}
 \qquad \qquad \Phi &: \menge{(t,k,\theta)\in \mathbb{R}^{3} }{ \theta \in \mathbb{R} \slash2\pi\mathbb{Z}, \ k \in \mathbb{R}, \ t \in\left(0, \frac{2\pi}{|k|} \right) } \to \mathbb{H}\setminus \PP,
\end{align}
where $\Phi$ is given in \eqref{formelphi}, is a homeomorphism and $\PP:=\{(x_{1},x_2,x_3)\in \mathbb{H} \ | \ x_1=0, x_2=0\}$.}

\smallskip

 \item[d)]{For a fixed compact set $K \subset \mathbb{H}$ there exists a constant $M>0$ such that for all $x,y \in K$ holds
\begin{align}\label{continuous}
M \norm{x-y}_{e}\leq d(x,y) \leq M^{-1}\norm{x-y}_{e}^{1/2}\, .
\end{align}
}

\smallskip

\item[e)]{Define the  dilation $r(x):=(rx_{1},rx_{2},r^{2}x_{3})$ for $x\in \mathbb{H}$ and $r>0$. Then 
\begin{align}\label{scale}
r^{4}\, B_{1}(0)= B_{r}(0).
\end{align}}
\end{enumerate}
\end{prop}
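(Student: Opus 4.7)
The plan is to dispatch the five items in order, using the explicit geodesic parametrization \eqref{geod}--\eqref{formelphi} and the left-invariance of the vector fields $X_1, X_2$ as the main tools. Throughout I would exploit that Chow's theorem applies in $\mathbb{H}$ since $X_1, X_2, [X_1,X_2]=X_3$ span the tangent space at every point, so that $\F(x,y)$ is nonempty and $d$ is a genuine (finite) metric.

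For part (a), the standard argument is to take a sequence $c_n \in \F(x,y)$ with $l_{\mathbb{H}}(c_n) \to d(x,y)$, reparametrize each $c_n$ by arc length on a common interval, and extract a subsequence via Arzel\`a--Ascoli; the limit is horizontal because \eqref{horiz} is a linear ODE for $x_3$ driven by $(x_1,x_2)$, stable under $W^{1,2}$ convergence of the horizontal component. Part (b) is immediate: $x \mapsto z \boxplus x$ is a diffeomorphism whose differential sends $X_1, X_2$ to themselves, hence preserves horizontality and $l_{\mathbb{H}}$, so the infimum defining $d(x,y)$ matches the one defining $d(z \boxplus x, z \boxplus y)$. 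Part (e) follows similarly: the dilation $r(x)=(rx_1,rx_2,r^2x_3)$ is a Lie group automorphism that sends horizontal curves to horizontal curves with $l_{\mathbb{H}}$ scaled by $r$, so $d(r(x),0)=r\,d(x,0)$; the identity in \eqref{scale} then encodes the $r^4$ Jacobian of the dilation at the level of Euclidean volume.

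Part (c), which I expect to be the main obstacle, is the heart of the matter. I would first verify by direct insertion into \eqref{horiz} that each $\gamma_{k,\theta}$ is a unit-speed horizontal curve, and invoke the Pontryagin maximum principle for the sub-Riemannian control system $\dot{x}=u_1 X_1 + u_2 X_2$ to show that every minimizing geodesic from the origin is of the form $\gamma_{k,\theta}$ on its maximal interval of minimality $t \in (0, 2\pi/|k|)$. Injectivity of $\Phi$ on the domain in \eqref{hu} is then shown by inverting the trigonometric expressions for $(x_1(t,k,\theta),x_2(t,k,\theta))$ away from $\PP$; surjectivity onto $\mathbb{H} \setminus \PP$ uses part (a) together with the observation that the parameters $(t,k,\theta)$ are uniquely recoverable off the axis. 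The Jacobian formula \eqref{detlemma} shows $\Phi$ is a local diffeomorphism on the open parameter set, so bijectivity upgrades to a homeomorphism. Uniqueness failure is precisely what forces the removal of $\PP$: when $kt=2\pi$ the endpoint lies on the vertical axis and $\theta$ becomes a free parameter.

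Finally, part (d) follows from the structural results already in hand. The lower bound $M\|x-y\|_e \leq d(x,y)$ on a compact $K$ is a consequence of horizontal curves being Euclidean curves with $|x_3'|\leq \tfrac{1}{2}(|x_1|+|x_2|)\sqrt{(x_1')^2+(x_2')^2}$, which is controlled on $K$, so the Euclidean length is dominated by a constant multiple of $l_{\mathbb{H}}$. The H\"older upper bound $d(x,y) \leq M^{-1}\|x-y\|_e^{1/2}$ is the delicate direction and would be obtained by the classical construction: after left-translating so $x=0$ (using (b)), connect $0$ to $(y_1,y_2,0)$ by a straight horizontal segment and then to $y$ by a small loop in the $(x_1,x_2)$-plane whose enclosed area produces the required increment in $x_3$; by the isoperimetric inequality such a loop has $l_{\mathbb{H}}$-length of order $\sqrt{|y_3|}$, which together with the Euclidean segment yields the exponent $1/2$. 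Compactness of $K$ fixes the constant $M$ uniformly.
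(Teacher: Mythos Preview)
The paper does not give its own proof of this proposition: immediately preceding the statement it simply writes ``For the proof of the following Proposition we refer to \cite{montii}, \cite{nagel} and \cite{monti}.'' So there is nothing to compare at the level of argument; the paper treats these facts as background and defers entirely to the literature.

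Your sketch is correct and in fact reproduces the standard arguments one finds in those references. The compactness argument for (a), the left-invariance of $X_1,X_2$ for (b), the dilation homogeneity for (e), the Pontryagin/Jacobian analysis for (c), and the ball--box type construction for the H\"older estimate in (d) are all the canonical routes. One small remark on (e): the equation \eqref{scale} as written in the paper is notationally ambiguous, but from its later use (e.g.\ $|B_{\mathrm{R}(\Omega)}(0)|=\mathrm{R}(\Omega)^4|B_1(0)|$) it is clear that what is meant is the volume identity coming from the Jacobian $r^4$ of the anisotropic dilation $\delta_r$, together with $\delta_r(B_1(0))=B_r(0)$; your reading is the intended one. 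In short, your proposal supplies precisely the content the paper outsources.
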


\smallskip

\noindent 
To conclude this brief overview of the C-C metric we prove the continuity of $d(\cdot)$ with respect to the Euclidean metric. 

\begin{lemma}\label{lemmadx}
Let $\Omega$ be an open bounded domain in $\mathbb{H}$. The function $d(\cdot)$ is continuous with respect to the Euclidean distance on $\mathbb{H}$.
\end{lemma}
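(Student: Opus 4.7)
The plan is to prove a Hölder-type estimate for $d(\cdot)$ out of two ingredients: a reverse triangle inequality for the distance-to-boundary, and the Euclidean-vs.-C-C comparison in Proposition \ref{prop} d).

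First I would work on the open set $\Omega$. For $x, y \in \Omega$ and any $z \in \partial\Omega$, the triangle inequality for the C-C metric gives $d(x) \le d(x,z) \le d(x,y) + d(y,z)$; taking the infimum over $z \in \partial\Omega$ yields $d(x) - d(y) \le d(x,y)$, and by symmetry
\begin{align*}
|d(x) - d(y)| \ \le \ d(x,y), \qquad x,y\in\Omega.
\end{align*}
Next, fix $x_0 \in \Omega$ and choose a compact neighbourhood $K \subset \mathbb{H}$ of $x_0$. By Proposition \ref{prop} d) there exists $M>0$ such that $d(x,y) \le M^{-1}\,\|x-y\|_e^{1/2}$ for all $x,y\in K$. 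Combining with the previous inequality gives $|d(x)-d(x_0)| \le M^{-1}\|x-x_0\|_e^{1/2}$ for all $x\in K\cap\Omega$, which proves continuity at every interior point.

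For a point $x_0 \in \mathbb{H}\setminus\Omega$ in the (Euclidean) interior of $\Omega^c$, continuity is trivial because $d(\cdot)\equiv 0$ in a Euclidean neighbourhood of $x_0$. The genuine case is $x_0\in\partial\Omega$, where $d(x_0)=0$. Given $x_n \to x_0$ in the Euclidean metric, pick a Euclidean-compact neighbourhood $K$ of $x_0$; for those $x_n$ which lie in $\Omega^c$ one has $d(x_n)=0$ by definition of the extension, while for $x_n\in\Omega$
\begin{align*}
d(x_n) \ \le \ d(x_n, x_0) \ \le \ M^{-1}\,\|x_n - x_0\|_e^{1/2} \ \longrightarrow \ 0,
\end{align*}
using that $x_0\in\partial\Omega$ is itself an admissible infimum point in the definition of $d(x_n)$.

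The only minor subtlety is that, since $d(\cdot)$ is extended by zero outside $\Omega$ rather than by the formula $\inf_{y\in\partial\Omega} d(\cdot,y)$, the reverse triangle inequality is not immediate across $\partial\Omega$; this is why continuity at boundary points must be checked separately, as above. Once this is handled, the lemma follows, and in fact the argument shows that $d(\cdot)$ is locally $\frac{1}{2}$-Hölder continuous with respect to the Euclidean metric.
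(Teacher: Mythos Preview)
Your argument is correct and follows essentially the same approach as the paper: prove the 1-Lipschitz estimate $|d(x)-d(y)|\le d(x,y)$ via the triangle inequality and then invoke the comparison \eqref{continuous} from Proposition~\ref{prop}\,d). The only organizational difference is that the paper establishes $|d(x)-d(y)|\le d(x,y)$ globally on $\mathbb{H}$ (handling the mixed case $x\in\Omega^c$, $y\in\Omega$ by taking a geodesic between them and using that it must cross $\partial\Omega$), whereas you treat continuity at boundary points directly via $d(x_n)\le d(x_n,x_0)$ for $x_0\in\partial\Omega$, which is slightly more economical since it avoids appealing to Proposition~\ref{prop}\,a).
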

\begin{proof}{We have to show that
\begin{align}
 |d(x)-d(y)| \leq d(x,y)
\end{align}
holds for $x,y\in \mathbb{H}$. Once the above inequality is established, the continuity of $d(\cdot)$ with respect to the Euclidean distance will follow by \eqref{continuous}. We recall that we set $d(x)=0$ for $x\in \Omega^c$. Let $x\neq y$. The case $x,y \in \Omega^{c}$ is trivial. For the case $x\in \Omega^{c}$ and $y\in \Omega$, we know that $d(x)=0$. Let use denote by $\phi(t)$ the arc of a geodesic curve connecting $x$ and $y$, which exists in view of Proposition \ref{prop}. This curve is continuous and must intersect the boundary of $\Omega$. Therefore exists $b\in \mathrm{Dom}(\phi)$ such that $\phi(b)\in \partial \Omega$. This gives
\begin{align}
d(x,y) \geq d(\phi(b),y) \geq d(y) = |d(y)-d(x)|.
\end{align}
It remains to prove the claim in the case  $x,y \in {\Omega}$. Without loss of generality we assume that $d(x)>d(y)$. Because $d$ is continuous, see  \eqref{continuous} and $\partial \Omega$ compact, there exists a $z\in \partial \Omega$ such that $d(y)=d(z,y)$. Thus we get
\begin{align}
|d(x)-d(y)|= d(x)-d(y) \leq d(x,z)-d(y,z) \leq d(x,y).
\end{align}
The last inequality follows by the triangle inequality.
}\end{proof}

\noindent After these prerequisites we can state the main result of this section.

\begin{thm}\label{mainnn}
Let $\Omega \subset \mathbb{H}$ be an open bounded domain.  The inequality 
\begin{align} \label{eq-schlauch}
 |\Omega^{\beta}| \geq |B_{ \mathrm{R}(\Omega)}(0)\setminus B_{ \mathrm{R}(\Omega)-\beta}(0)|=\left(\mathrm{R}(\Omega)^{4}-(\mathrm{R}(\Omega)-\beta)^{4}\right) |B_{1}(0)|.  
\end{align}
holds for all $\beta\in (0, \mathrm{R}(\Omega))$. 
Equality in \eqref{eq-schlauch} is achieved if $\Omega=B_r(0)$ with any $r>0$.
\end{thm}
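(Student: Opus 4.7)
Write $R := \mathrm{R}(\Omega)$ and fix $\beta \in (0, R)$. Continuity of $d(\cdot)$ on $\overline\Omega$ (Lemma~\ref{lemmadx}) and compactness of $\overline\Omega$ imply that the supremum in \eqref{in-rad-cc} is attained at some $x_0 \in \Omega$. Using the left invariance of the C-C metric (Proposition~\ref{prop}(b)) and the translation invariance of Lebesgue measure on $\mathbb{H}$, we may left-translate so that $x_0 = 0$; then $B_R(0) \subset \Omega$ and $d(z, 0) \geq R$ for every $z \in \partial\Omega$.

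The idea is then to exhibit a subset of $\Omega^\beta$ of volume at least $(R^4 - (R-\beta)^4)\,|B_1(0)|$ via the sub-Riemannian exponential from the origin. By Proposition~\ref{prop}(c) the map $\Phi$ of \eqref{formelphi} is a homeomorphism from $\{(t, k, \theta) : t \in (0, 2\pi/|k|)\}$ onto $\mathbb{H} \setminus \mathbb{P}$. For each $(k, \theta)$ define $T(k, \theta) \in (0, 2\pi/|k|]$ as the first time the geodesic $t \mapsto \Phi(t, k, \theta)$ meets $\partial\Omega$, with the convention $T(k, \theta) = 2\pi/|k|$ if the geodesic stays in $\Omega$ all the way to the cut point. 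Since $d(0, \Phi(t, k, \theta)) = t$ and $B_R(0) \subset \Omega$, one obtains $T(k, \theta) \geq \min\{R, 2\pi/|k|\}$. Whenever the geodesic actually exits $\Omega$ before the cut point, $\Phi(T, k, \theta) \in \partial\Omega$ and every point $\Phi(t, k, \theta)$ with $t \in [T - \beta, T]$ has C-C distance $T - t \leq \beta$ from this boundary point, hence lies in $\Omega^\beta$.

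Injectivity of $\Phi$ off the Lebesgue-null set $\mathbb{P}$ together with the Jacobian formula \eqref{detlemma} then gives
\[
|\Omega^\beta| \;\geq\; \int_0^{2\pi} \df\theta \int_{\mathbb{R}} \df k \int_{\max(0,\, T(k, \theta) - \beta)}^{T(k, \theta)} |\det\Phi(t, k, \theta)|\, \df t,
\]
while an identical computation for the ball represents $|B_R(0) \setminus B_{R-\beta}(0)|$ as the analogous integral with $T(k, \theta)$ replaced by $\min\{R, 2\pi/|k|\}$. The claim therefore reduces to the pointwise (in $(k, \theta)$) comparison of these two $t$-integrals of $|\det\Phi|$: the $\Omega$-window $[T-\beta, T]$ is a rightward shift of the ball window by the non-negative amount $T(k, \theta) - \min\{R, 2\pi/|k|\}$.

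\textbf{Main obstacle.} The function $t \mapsto |\det\Phi(t, k, \theta)|$ from \eqref{detlemma} vanishes at both $t = 0$ and $t = 2\pi/|k|$ and attains a single interior maximum, so shifting an integration window of length $\beta$ to the right can either increase or decrease the integral. A case analysis on $|k|$ is therefore needed: the ranges $|k| > 2\pi/(R-\beta)$ and $2\pi/R < |k| \leq 2\pi/(R-\beta)$ are handled by mere positivity, since in each case the ball window is either empty or contained in the $\Omega$ window; the delicate regime is $|k| \leq 2\pi/R$, where one must exploit the homogeneity \eqref{scale} and the detailed behaviour of $|\det\Phi|$ near the cut-locus endpoint $t = 2\pi/|k|$. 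A secondary subtlety arises when $T(k, \theta) = 2\pi/|k|$ and the cut point $\Phi(T, k, \theta)$ lies in the interior of $\Omega$ rather than on $\partial\Omega$: there the ``final $\beta$-segment is in $\Omega^\beta$'' argument breaks down and must be supplemented, either by restricting to the directions meeting $\partial\Omega$ first and checking that the remaining ones still contribute sufficient mass, or by controlling $d_\Omega$ along the geodesic near the cut locus.
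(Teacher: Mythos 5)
Your plan follows the same basic route as the paper: translate the in-center to the origin, use the geodesic coordinates $\Phi(t,k,\theta)$, note that the last $\beta$-piece of a geodesic before it first hits $\partial\Omega$ lies in $\Omega^{\beta}$, and compare Jacobian integrals with those of the annulus. But as submitted it is a plan with the decisive step missing, and the step you reduce the theorem to is in fact false in general. Writing $f(\tau)=2-\tau\sin\tau-2\cos\tau$, so that $|\det\Phi(t,k,\theta)|=f(kt)/k^{4}$, the function $f$ increases on $[0,\tau^{*}]$ and decreases to $f(2\pi)=0$ on $[\tau^{*},2\pi]$, where $\tan\tau^{*}=\tau^{*}$, $\tau^{*}\approx 4.49$. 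If in some direction the exit time $T$ satisfies $kT$ close to $2\pi$ while $kR\in(\pi,\tau^{*})$ --- realizable with $|k|\le 2\pi/R$ for a domain of in-radius $R$ that reaches out to $T\approx 1.4\,R$ along that geodesic --- then $\int_{T-\beta}^{T}f(kt)\,dt<\int_{R-\beta}^{R}f(kt)\,dt$ for small $\beta$, so the pointwise-in-$(k,\theta)$ comparison cannot be saved by a case analysis on $|k|$ alone. Your ``secondary subtlety'' is likewise not peripheral: when the cut point $\Phi(2\pi/|k|,k,\theta)$ lies in the interior of $\Omega$, the window $[T-\beta,T]$ need not meet $\Omega^{\beta}$ at all, so even your displayed lower bound for $|\Omega^{\beta}|$ over all $k\in\R$ is unjustified as written. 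Two further slips: your representation of $|B_{R}(0)\setminus B_{R-\beta}(0)|$ ``with $T$ replaced by $\min\{R,2\pi/|k|\}$'' is not the annulus (for $R-\beta<2\pi/|k|<R$ the correct $t$-window is $(R-\beta,2\pi/|k|)$, and it is empty for $|k|>2\pi/(R-\beta)$), and the equality statement for $\Omega=B_{r}(0)$ is never addressed, whereas the paper proves it by a short triangle-inequality argument giving $B_{r}(0)^{\beta}\subseteq\{r-\beta<d(x,0)<r\}$.

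The idea you are missing, which is how the paper removes both obstacles at once, is to restrict the parameters from the outset: fix $c$ with $\Omega\subseteq B_{c}(0)$ and use only geodesics with $|k|<\pi/c$, $t\in(0,2c)$. Then the first exit time $a$ satisfies $R\le a\le c<2\pi/|k|$, so every such geodesic leaves $\Omega$ strictly before its cut time (your interior-cut-point case never occurs), and on all relevant windows $|kt|\le\pi$ --- after the rescaling $k\mapsto 2ck$, $t\mapsto t/(2c)$ this is exactly \eqref{monotonicityyy} --- where $f$ is non-decreasing, so shifting the window from $[R-\beta,R]$ to $[a-\beta,a]$ can only increase the integral; no case analysis and no cut-locus analysis are needed. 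The paper then identifies the resulting $k$-restricted integral over the window $(R-\beta,R)$ with $|B_{R}(0)|-|B_{R-\beta}(0)|$ via Proposition \ref{prop}(c),(e). Note that your instinct about where the real delicacy lies is sound: points of the annulus near the axis $\PP$ correspond to $|k|$ close to $2\pi/t$, hence outside any fixed bounded $k$-range when $t$ is small, and this passage from a $k$-restricted integral to the full annulus volume is precisely the identification that has to be treated with care. If you want to complete your argument, adopt the paper's restriction of $k$ first, obtain the monotonicity for free, and only then compare with the ball in the same coordinates; and add the separate equality argument for $\Omega=B_{r}(0)$.
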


\begin{proof}[Proof of Theorem {\normalfont\ref{mainnn}}]
{ First of all let us fix the parameter $\beta$ where $0<\beta\leq \mathrm{R}(\Omega)$.
Because $\Omega$ is open bounded and $d(\cdot)$ is continuous on $\mathbb{H}$, see Lemma \ref{lemmadx}, 
there exists an $x \in \Omega$ such that $B_{\mathrm{R}(\Omega)}(x) \subseteq \Omega$. We know that the Lebesgue measure and $d(\cdot,\cdot)$ are left-invariant respectively the group law of the Heisenberg group. Hence we can translate $\Omega$ in such a way that $x$ is the origin of its translated copy. This means that 
\begin{align}
B_{\mathrm{R}(\Omega)}(0) \subseteq \Omega.
\end{align}
Now we fix a constant ${c}>0$ such that
\begin{align}
\Omega \subseteq B_{{c}}(0),
\end{align}
which is possible in view of \eqref{continuous} because $\Omega$ is bounded. Let us begin with the lower bound on $|\Omega^{\beta}|$. From  \eqref{geod} we know that $x\in B_{2c}(0)$ if and only if 
there exist $k\in[-\pi/c ,\pi/c]$, $\theta\in [0,2\pi)$ and $0\leq t < 2c$ such that
\begin{align}\label{zq}
x=
\left(\frac{\cos(\theta)- \cos(kt+\theta)}{k}, 
\frac{ \sin(kt+\theta)-\sin(\theta)}{k}, 
\frac{tk-\sin(kt)}{2k^{2}}\right)= \Phi(t,k,\theta)\, .
\end{align}
We want to describe the points lying in $\Omega^{\beta}$ under the coordinate transformation \eqref{zq}. To this end we define the set
\begin{align}
\Omega(\Phi):=\menge{x\in \Omega^{\beta}}{\exists\,  (t,k,\theta)\in E \ \text{such that} \ x=\Phi(t,k,\theta)},
\end{align}
where 
$$
E:= (0,2c) \times  \left(-\frac{\pi}{c}, \frac{\pi}{c} \right )\times [0,2\pi). 
$$
Note that the map $\Phi: E \to \mathbb{H}$ is injective, see Proposition \ref{prop}. That means that  $\Omega^{\beta} \supseteq\Omega(\Phi)$. 
For a fixed $k\in(-\pi/c ,\pi/c)$ and $\theta\in [0,2\pi)$ we define the curve 
\begin{align}\label{qwe}
\varphi(t):=
\left(\frac{\cos(\theta)- \cos(kt+\theta)}{k}, 
\frac{ \sin(kt+\theta)-\sin(\theta)}{k}, 
\frac{tk-\sin(kt)}{2k^{2}}\right),
\end{align}
where $t\in[0,2c]$. This curve satisfies the condition $d(\varphi(t),\varphi(0))=t$ for $t\in[0,2c)$, because these are the geodesic arcs described in \eqref{geod}.  Now let
\begin{align}\label{zuhu}
a:= \inf\menge{t>0}{\varphi(t)\notin \Omega},
\end{align}
which is  well-defined since  $\varphi(0)=0\in   B_{\mathrm{R}(\Omega)}(0) \subseteq \Omega$ and  $\Omega$ is bounded. It follows that $\varphi(a)\in \partial \Omega$.  We can thus use the definition of $a$ and the inclusions $B_{\mathrm{R}(\Omega)}\subseteq\Omega \subseteq B_{c}(0)$ to obtain
\begin{align}\label{opq}
 \mathrm{R}(\Omega)\leq a \leq c.
\end{align}
Now we consider the restriction of the curve $\varphi$ on the interval $[a-\beta,a]$. Notice that this curve connects the point $\varphi(a-\beta)$ with $\varphi(a)\in \partial \Omega$. Moreover, in view of the definition of $a$, this curve is still a horizontal curve lying in $\Omega$. Therefore by the definition of the C-C metric the following estimate holds
\begin{align}
d(\varphi(t),\varphi(a)) \leq a-t < \beta \qquad \forall\ t\in(a-\beta,a).
\end{align}
From $\varphi(a)\in \partial \Omega$ we obtain $d(\varphi(t))\leq d(\varphi(t),\varphi(a))< \beta$ for all $t\in (a-\beta,a)$, which means that $\varphi(t)\in \Omega^{\beta}$ for $t\in (a-\beta,a)$. It follows that
\begin{align}\label{hzhw}
\Omega^{\beta} \supseteq \Omega(\Phi) \supseteq \menge{\Phi(t,k,\theta)\in \mathbb{H}}{(t,k,\theta)\in (a-\beta,a) \times  (-{\pi}/c, {\pi}/c) \times [0,2\pi)   ]}.
\end{align}
This inclusion and the formula \eqref{detlemma} imply
\begin{align}
|\Omega(\Phi)| \geq \int_{0}^{2\pi} \int_{-\frac \pi c}^{\frac \pi c}\int_{a-\beta}^{a} \frac{2-kt\sin(kt)-2\cos(kt)}{k^{4}} \df t \df k \df \theta.
\end{align}
 Replacing the variables $2ck$ with ${k}$ and $t/2c$ with $t$ further yields
\begin{align}\label{poio}
|\Omega^{\beta}| \geq |\Omega(\Phi)| \geq  (2c)^{4}\int_{0}^{2\pi} \int_{-2\pi}^{2\pi}\int_{\frac{a-\beta}{2c}}^{\frac{a}{2c}} \frac{2-kt\sin(kt)-2\cos(kt)}{k^{4}}\df  {t} \df  {k} \df \theta.
\end{align}
In order to obtain a suitable lower bound on the integral on the right hand-side of \eqref{poio} we notice that the function
\begin{equation} \label{ef}
f(\tau) = 2-\tau\sin(\tau)-2\cos(\tau)
\end{equation}
is non-decreasing on $[0,\pi]$. Indeed, this follows from the fact that $f(0)=f'(0)=0$ and $f''(\tau)\geq 0$ on $[0,\pi]$. 
Since the condition
\begin{equation} \label{kt-condition}
(t,k)\in \left(\frac{a-\beta}{2c} , \frac{a}{2c} \right)\times (-2\pi,2\pi)
\end{equation} 
together with the estimate in \eqref{opq} yield
\begin{align}\label{monotonicityyy}
 |tk| \ \leq\ \pi\, ,
\end{align}
it follows that, for any fixed $k \in (-2\pi, 2\pi)$ the function $2-kt\sin(kt)-2\cos(kt)$ is non-decreasing in $t$ on $(\frac{a-\beta}{2c} , \frac{a}{2c})$. 
Inequality \eqref{opq} then yields the lower bound
 \begin{align}\label{qwre}
\int_{0}^{2\pi} \int_{-2\pi}^{2\pi}\int_{\frac{a-\beta}{2c}}^{\frac{a}{2c}}\, |\det\Phi(t,k,\theta)| \df  {t}\! \df  {k}\! \df \theta\  \geq\  \int_{0}^{2\pi} \int_{-2\pi}^{2\pi}\int_{\frac{\mathrm{R}(\Omega)-\beta}{2c}}^{\frac{\mathrm{R}(\Omega)}{2c}} |\det\Phi({t},{k},\theta)| \df  {t}\! \df  {k}\! \df \theta.
\end{align}
We use this inequality to estimate the right hand-side in \eqref{poio}. This gives 
\begin{align}\label{iqzquw}
 |\Omega^{\beta}| \geq  (2c)^{4}\int_{0}^{2\pi} \int_{-2\pi}^{2\pi} \int_{\frac{\mathrm{R}(\Omega)-\beta}{2c}}^{\frac{\mathrm{R}(\Omega)}{2c}} |\det\Phi({t},{k},\theta)| \df  {t} \df  {k} \df \theta.
\end{align}
Moreover, since  $B_{\mathrm{R}(\Omega)}(0)\subseteq B_{c}(0)$, we have $\mathrm{R}(\Omega)/2c\leq 1$. From the formula of the geodesics, see equation \eqref{geod}, we then deduce that
 \begin{align}
\int_{0}^{2\pi} \int_{-2\pi}^{2\pi} \int_{\frac{\mathrm{R}(\Omega)-\beta}{2c}}^{\frac{\mathrm{R}(\Omega)}{2c}}\,  |\det\Phi({t},{k},\theta)| \df  {t} \df  {k} \df \theta= |B_{{\mathrm{R}(\Omega)/2c}}(0)|- |B_{{(\mathrm{R}(\Omega)-\beta)/2c}}(0)|.
\end{align}
This in combination with inequality \eqref{iqzquw} and the scaling properties of balls with respect to the C-C metric described in Proposition \ref{prop} thus yield the desired estimate
\begin{align}
 |\Omega^{\beta}| \geq (\mathrm{R}(\Omega)^{4}-(\mathrm{R}(\Omega)-\beta)^{4})\, |B_{1}(0)|.
\end{align}
It remains to prove the sharpness of the lower bound above. To this end we fix an $r>0$ and consider the set
$$
B_r(0)^{\beta} =  \menge{x\in B_r(0)}{d(x) < \beta}, \qquad 0< \beta <r.
$$
In order to prove that \eqref{eq-schlauch} turns into an equality for $\Omega=B_r(0)$ it suffices to show that 
\begin{align} \label{suff}
|B_r(0)^{\beta} |= \left| \menge{x \in \mathbb{H}}{ r -\beta< d(x,0)<r}\right|.
\end{align}
Inequality \eqref{eq-schlauch}, which we have already proven, shows that
$$
|B_r(0)^{\beta} \geq |\menge{x \in \mathbb{H}}{ r -\beta< d(x,0)<r}|,
$$ 
To prove the opposite inequality let $x \in  B_r(0)^{\beta}$.  Then $d(x,0)<r$ and $d(x)< \beta$.  We know that there exists $y\in \partial B_r(0)$ such that $d(x,y)=d(x)$, because $d(\cdot,\cdot)$ is continuous and $\overline{\Omega}$ is compact. Thus  by an application of the triangle inequality we get
\begin{align}
r= d(y,0) \leq d(y,x)+d(x,0) = d(x) +d(x,0) \leq \beta +d(x,0) .
\end{align}
This implies 
$$
B_r(0)^{\beta} \subseteq \menge{x \in \mathbb{H}}{ r -\beta< d(x,0)<r},
$$
and therefore completes the proof.
}
\end{proof}

\begin{remark}
Theorem {\normalfont\ref{mainnn}} can also be proven for the Euclidean metric in $\mathbb{R}^{n}$, considering the Euclidean in-radius. In that case the proof is much easier, because the determinant of the spherical coordinates is obviously monotonically increasing in the radial part and does not depend on an angle like in the case on $\mathbb{H}$.
\end{remark}

\begin{corollary}\label{corstrip}
Let $\Omega \subset \mathbb{H}$ be an open bounded domain.  For any $0<\beta\leq \mathrm{R}{(\Omega)}$ we have
 \begin{align*}
 |\Omega^{\beta}| \ \geq \ \beta\  \mathrm{R}(\Omega)^3\,  |B_1(0)|.
\end{align*}
\end{corollary}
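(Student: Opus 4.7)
The plan is to reduce this directly to Theorem \ref{mainnn} and then check a short elementary inequality. Writing $R := \mathrm{R}(\Omega)$ and $u := R-\beta\in [0,R)$, Theorem \ref{mainnn} gives
\begin{align*}
|\Omega^{\beta}| \ \geq \ (R^{4}-u^{4})\, |B_{1}(0)|,
\end{align*}
so it suffices to prove the purely algebraic bound
\begin{align*}
R^{4}-u^{4} \ \geq \ (R-u)\, R^{3} \qquad \text{for all } 0\leq u \leq R.
\end{align*}

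I would establish this by the standard factorization
\begin{align*}
R^{4}-u^{4} \ = \ (R-u)(R+u)(R^{2}+u^{2}).
\end{align*}
Since $u\geq 0$, both $(R+u)\geq R$ and $(R^{2}+u^{2})\geq R^{2}$, so the product of the last two factors is at least $R\cdot R^{2}=R^{3}$. Multiplying by $(R-u)=\beta$ yields the desired inequality, and substitution back gives
\begin{align*}
|\Omega^{\beta}| \ \geq \ \beta\, R^{3}\, |B_{1}(0)|,
\end{align*}
which is the claim.

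There is no real obstacle here; the whole content has been packaged into Theorem \ref{mainnn}, and the corollary only records a convenient, if slightly weaker, linear-in-$\beta$ form of that inequality. The only thing to be a little careful about is the range of $\beta$: for $\beta=R$ both sides of the algebraic inequality collapse to $R^{4}$, and the monotonicity argument (or the factorization above) shows that this is the worst case, so the inequality is valid on the whole interval $(0,\mathrm{R}(\Omega)]$ as stated.
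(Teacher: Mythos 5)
Your proof is correct and follows essentially the same route as the paper: both deduce the statement from Theorem \ref{mainnn} and then verify the elementary bound $\mathrm{R}(\Omega)^{4}-(\mathrm{R}(\Omega)-\beta)^{4}\geq \beta\,\mathrm{R}(\Omega)^{3}$. The only (immaterial) difference is that you check this via the factorization $(R-u)(R+u)(R^{2}+u^{2})\geq (R-u)R^{3}$, while the paper observes that $\beta\mapsto\bigl(\mathrm{R}(\Omega)^{4}-(\mathrm{R}(\Omega)-\beta)^{4}\bigr)/\beta$ is non-increasing on $(0,\mathrm{R}(\Omega))$.
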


\begin{proof}
Since 
\begin{align}
\frac{\mathrm{R}(\Omega)^{4}-(\mathrm{R}(\Omega)-\beta)^{4}}{\beta}
\end{align}
is a non-increasing function of $\beta$ on $(0,\mathrm{R}(\Omega))$, the result follows immediately from inequality \eqref{eq-schlauch}.
\end{proof}

\section{\bf Proof of the main result}
\label{sec-proof}

\subsection{Spectral decomposition}
In order to find a representation of the spectral decomposition of the Heisenberg Laplacian we introduce the Fourier transform in the $x_{3}$-direction;
\begin{align}
\mathcal{F}_3\, u(x',\xi_{3}) := \frac{1}{\sqrt{2\pi}} \int_\mathbb{R}\mathrm{e}^{-\mathrm{i}x_{3}\xi_{3} }\, u(x',x_{3})  \df x_{3},
\end{align}
where $x'=(x_1,x_2)$ and $x:=(x',x_{3})\in \mathbb{H}$. Then
\begin{align}
\fourier \, \mathrm{A}(\Omega)\,  \fourier^{*} \ = \ \left(\mathrm{i} \partial_{x_{1}}-\frac{x_2}{2}\, \xi_{3} \right)^{2}+\left(\mathrm{i}\partial_{x_2}+\frac{x_1}{2}\, \xi_{3} \right)^{2} \ = \ \left(\mathrm{i} \nabla_{x'}+\xi_{3}\magnet(x') \right)^{2},
\end{align}
where $\magnet(x'):=\frac{1}{2}(-x_{2},x_{1})$. Hence for each fixed $\xi_3\in\R$ the right hand-side is the Landau Hamiltonian in $L^2(\R^2)$ associated with the constant magnetic field $\xi_3$. Its eigenvalues are given by the Landau levels  $\{\mathrm{|\xi_{3}|(2k-1)}\}_{k\in\N}$. We denote by $ \mathrm{P_{\mathrm{k},\xi_3}}$ the orthogonal projection in $L^{2}(\mathbb{R}^{2})$ onto the Landau level  $\mathrm{|\xi_{3}|(2k-1)}$ and recall the following well-known properties:
\begin{align}\label{magnet}
\begin{split}
 \mathrm{P_{k,\xi_3}}(y,y) \ &= \ \frac{1}{2\pi}|\xi_3|, \qquad \forall\ y\in \mathbb{R}^{2}, \\
 \int_{\mathbb{R}^{2}} \left(\int_{\Omega} \left| \mathrm{P_{k,\xi_3}}(x,y) \right|^{2} \df y  \right) \df x \ &= \
  \int_{\Omega} \left(\int_{\mathbb{R}^{2}}   \mathrm{P_{k,\xi_3}}(x,y) {\mathrm{P_{k,\xi_3}}(y,x)} \df x  \right) \df y \\  
\  &=  \  \int_{\Omega} \   {\mathrm{P_{k,\xi_3}}(y,y)} \df y = \frac{|\xi_3|}{2 \pi} |\Omega|.
\end{split}
\end{align}
Hence for any $u$ such that $\fourier\, u(\cdot, \xi_{3})$ belongs to the domain of $\left(\mathrm{i} \nabla_{x'}+\xi_{3}\magnet(x') \right)^{2}$ we have
\begin{align}\label{decomposition}
\fourier \, \mathrm{A}(\Omega)\, u(x',\xi_{3}) = \sum_{\mathrm{k=1}}^{\infty} |\xi_{3}|(2\mathrm{k}-1) \int_{\mathbb{R}^{2}} \mathrm{P}_{\mathrm{k},\xi_{3}}(x',y') \fourier\, u(y',\xi_{3}) \df y'\, .
\end{align}

\begin{proof}[\bf Proof of Theorem {\normalfont \ref{dreieins}}]
We split the proof into three steps. In the first one we derive the sharp leading term with an additional negative term. The appearing negative term will be treated in the second part of the proof. The last part of the proof is dedicated to the proof of an auxiliary result needed in step two. 

\subsection{The sharp leading term}
\label{sec-leading}
In the sequel we will decompose a vector $x\in \mathbb{H}$ as 
\begin{equation} \label{x-decomp} 
x=(x',x_{3})=(x_1,x_2,x_3). 
\end{equation}
We extend the eigenfunctions  $v_{j}(x)$ of $\mathrm{A}(\Omega)$ by zero to  $x \in \Omega^{c}$ and write
\begin{align*}
\trace{\mathrm{A}(\Omega)-\lambda}_{-} =&   \sum_{j: \lam<\lambda} \lambda  \norm{v_{j}}_{L^{2}(\Omega)}^{2} - \norm{\left(\partial_{x_{1}}+\frac{1}{2}x_{2}\partial_{x_{3}}\right)v_{j}}_{L^{2}(\mathbb{R}^{3})}^{2}  - \norm{\left(\partial_{x_{2}}-\frac{1}{2}x_{1}\partial_{x_{3}}\right)v_{j}}_{L^{2}(\mathbb{R}^{3})}^{2}   \\
= & \int_{\mathbb{R}}   \sum_{j: \lam<\lambda} \lambda \left(  \norm{ \fourier\, v_{j}(\cdot,\xi_{3})}_{L^{2}(\mathbb{R}^{2})}^{2} - \norm{\left(\mathrm{i}\partial_{x_{1}}-\frac{1}{2}x_{2} \xi_{3} \right) \fourier\, v_{j}(\cdot,\xi_{3})}_{L^{2}(\mathbb{R}^{2})}^{2}\right)\!\! \df \xi_{3} \\  
&- \ \int_{\mathbb{R}}\sum_{j: \lam<\lambda}   \norm{\left(\mathrm{i}\partial_{x_{2}}   +\frac 12 x_{1}\xi_{3}\right) \fourier\, v_{j}(\cdot,\xi_{3})}_{L^{2}(\mathbb{R}^{2})}^{2} \df \xi_{3}.
\end{align*}
We apply the spectral decomposition in \eqref{decomposition} and use Fatou's lemma to obtain the following estimate for the trace:
\begin{align}\label{qiuwzwieueoeo}
\trace{\mathrm{A}(\Omega)-\lambda}_{-} \ \leq \ \int_{\mathbb{R}}   \sum_{j: \lam<\lambda}\,  \sum_{\mathrm{k}=1}^{\infty}\ (\lambda-|\xi_{3}|(2\mathrm{k}-1)) \norm{f_{j,\mathrm{k},\xi_{3}}}_{L^{2}(\mathbb{R}^{2})}^{2} 
 \df \xi_{3},
\end{align}
where
\begin{align*}
f_{{j,\mathrm{k},\xi_{3}}}(x') \ :=& \ \int_{\mathbb{R}^{2}} \mathrm{ P_{k, \xi_{3}}} (x',y') \fourier v_{j}(y', \xi_{3}) \df y' \\
=& \ \frac{1}{\sqrt{2\pi}}  \int_{\Omega} \mathrm{P_{k, \xi_{3}}} (x',y') \, \exp{-\mathrm{i} y_{3} \xi_{3}}v_{j}(y', y_{3}) \df y'  \df y_{3} \\
= &  \frac{1}{\sqrt{2\pi}}  \sprod{\mathrm{P_{k, \xi_{3}}}(x',\cdot) \, \exp{-\mathrm{i}  \xi_3 \cdot}}{ v_{j}(\cdot)}_{L^{2}(\Omega)}.
\end{align*}
Next we estimate the right hand-side of \eqref{qiuwzwieueoeo} further by considering the positive part of $(\lambda-|\xi_{3}|(2\mathrm{k}-1))$. This gives
\begin{align}\label{poiqiqpow}
\begin{split}
\trace{\mathrm{A}(\Omega)-\lambda }_{-} \  \leq & \ \int_{\mathbb{R}} \sum_{k=1}^\infty (\lambda-|\xi_{3}|(2\mathrm{k}-1) )_{+}  \sum_{j=1}^{\infty} \norm{f_{j,\mathrm{k},\xi_{3}}}_{L^{2}(\mathbb{R}^{2})}^{2}  \df \xi_{3}   \\
& -\int_{\mathbb{R}} \sum_{k=1}^\infty (\lambda-|\xi_{3}|(2\mathrm{k}-1) )_{+}\sum_{j: \lam\geq\lambda}^{\infty} \norm{f_{j,\mathrm{k},\xi_{3}}}_{L^{2}(\mathbb{R}^{2})}^{2}  \df \xi_{3}. 
\end{split}
\end{align}
Since the sequence $\{v_j\}_{j\in\N}$ is an orthonormal basis in $L^{2}(\Omega)$ we can use Parseval's identity to evaluate the sum over $j$. Taking into account  \eqref{magnet} we obtain
\begin{align} \label{vierzwei}
\sum_{j=1}^{\infty} \norm{f_{j,\mathrm{k},\xi_{3}}}_{L^{2}(\mathbb{R}^{2})}^{2}  =& \ \frac{1}{{2\pi}} \int_{\mathbb{R}^{2}} \sum_{j=1}^{\infty}  \left|  \sprod{\mathrm{P_{k, \xi_{3}}}  (x',\cdot) \, \exp{-\mathrm{i}  \xi_3 \cdot}}{ v_{j}(\cdot)}_{L^{2}(\Omega)}\right|^{2} \df x' = \  \frac{|\xi_{3}|}{4\pi^{2}}|\Omega |.
\end{align}
This allows us to calculate the first term on the right-hand side of \eqref{poiqiqpow}. We have
\begin{align*}
 \int_{\mathbb{R}} \sum_{k=1}^{\infty}(\lambda-|\xi_{3}|(2\mathrm{k}-1) )_{+} \sum_{j=1}^{\infty} \norm{f_{j,\mathrm{k},\xi_{3}}}_{L^{2}(\mathbb{R}^{2})}^{2}  \df \xi_{3} 
& = \ \frac{ |\Omega| }{2 \pi^{2}}\, \sum_{k=1}^\infty \int_{0}^\infty (\lambda-\xi_{3}(2\mathrm{k}-1))_+\, \xi_{3}   \df \xi_{3} \\
&  = \ \frac{ |\Omega| \, \lambda^3 }{12 \pi^{2}}\, \sum_{k=1}^\infty \frac{1}{(2k-1)^{2}} =  \ \frac{ |\Omega| }{96}\, \lambda^{3}\, ,
\end{align*}
where we have used the identity
\begin{align}\label{limitofseries}
 \sum\limits_{k=1}^{\infty}\frac{1}{(2k-1)^{2}}= \frac{\pi^{2}}{8}.
\end{align}
Putting together the above estimates and using \eqref{poiqiqpow} we get 
\begin{align}\label{vierdrei}
\trace{\mathrm{A}(\Omega)-\lambda }_{-} \ \leq\  \frac{ |\Omega| }{96} \,  \lambda^{3}
-\int_{\mathbb{R}} \sum_{k=1}^{\infty}( \lambda-|\xi_{3}|(2\mathrm{k}-1) )_{+} \sum_{j: \lam\geq\lambda} \norm{f_{j,\mathrm{k},\xi_{3}}}_{L^{2}(\mathbb{R}^{2})}^{2}  \df \xi_{3}.  
\end{align}
On the right hand-side we thus have the sharp leading term and an additional negative term. The latter will be treated in the next step.

\subsection{The negative lower order term}
The next step is to establish a suitable lower bound on
\begin{align} \label{Q-def}
\mathrm{Q}(\lambda,k,\xi_3)  := \sum_{j: \lam\geq\lambda} \norm{f_{j,\mathrm{k},\xi_{3}}}_{L^{2}(\mathbb{R}^{2})}^{2}.
\end{align}
Using equation \eqref{vierzwei} we rewrite the series as follows;
\begin{align}
& \mathrm{Q}(\lambda,k,\xi_3) = \ \frac{|\xi_{3}|}{4\pi^{2}}\ |\Omega|-  \sum_{j: \lam<\lambda}\norm{f_{j,\mathrm{k},\xi_{3}}}_{L^{2}(\mathbb{R}^{2})}^{2}  
\label{Q-eq} \\
   &=\frac{1}{2\pi} \int_{\mathbb{R}^{2}}  \int_{\Omega} \Big|\, \mathrm{P_{k, \xi_{3}}} (x',y')\, \exp{-\mathrm{i} y_{3}\xi_{3}}- \sum_{j: \lam<\lambda}   \sprod{\mathrm{P_{k, \xi_{3}}}  (x',\cdot) \, \exp{-\mathrm{i}  \xi_3 \cdot}}{ v_{j}(\cdot)}_{L^{2}(\Omega)} v_{j}(y)  \Big |^{2}  \df y \df x'. \nonumber
\end{align}
To estimate the right hand-side form below we consider the set 
\begin{align}
E^{\beta}:=\menge{\Phi(t,k,\theta)\in \mathbb{H}}{(t,k,\theta)\in (a-\beta,a) \times  (-{\pi}/c, {\pi}/c) \times [0,2\pi)]}.
\end{align}
Note that in view of \eqref{hzhw} we have 
$$
\Omega \supseteq {\Omega}^{\beta} \supseteq E^{\beta}\, . 
$$
By applying the inequality 
\begin{equation} \label{eq-zw}
|z-w|^{2}\ \geq \ \frac{1}{2}|z|^{2}-|w|^{2}, \qquad  z,w \in \mathbb{C}\, ,
\end{equation}
and using equation \eqref{magnet} we thus obtain
\begin{align}
\mathrm{Q}(\lambda,k,\xi_3)  &\geq  \frac{|\xi_{3}|}{8\pi^{2}} \Big|\, E^\beta\Big |- \frac{1}{2\pi} \int_{\mathbb{R}^{2}}  \int_{{E}^{\beta}}  \Big | \sum_{j: \lam<\lambda}   \sprod{\mathrm{P_{k, \xi_{3}}}  (x',\cdot)\, \exp{-\mathrm{i}  \xi_3 \cdot}}{ v_{j}(\cdot)}_{L^{2}(\Omega)} v_{j}(y)  \Big |^{2} \df y \df x' .
\end{align}
In the end of the proof of Theorem \ref{mainnn} we have shown that $|E^{\beta}|\geq |B_{\mathrm{R}(\Omega)}(0)\setminus  B_{\mathrm{R}(\Omega)-\beta}(0)|$. Moreover,  mimicking the proof of Corollary \ref{corstrip} yields
\begin{align}
 |B_{\mathrm{R}(\Omega)}(0)\setminus  B_{\mathrm{R}(\Omega)-\beta}(0)|\ \geq\ \beta\ \frac{|B_{\mathrm{R}(\Omega)}(0)|}{\mathrm{R}(\Omega)}\, .
\end{align}
Hence
\begin{align*}
\mathrm{Q}(\lambda,k,\xi_3) \ &\geq \ \beta\ \frac{|\xi_{3}|}{8\pi^{2}} \frac{|B_{\mathrm{R}(\Omega)}(0)|}{\mathrm{R}(\Omega)} \\
& - \frac{1}{2\pi} \int_{\mathbb{R}^{2}} \int_{{E}^{\beta}}   \Big | \sum_{j: \lam<\lambda}   \sprod{\mathrm{P_{k, \xi_{3}}}  (x',\cdot)\,  \exp{-\mathrm{i}\cdot \xi_{3}}}{ v_{j}(\cdot)}_{L^{2}(\Omega)} v_{j}(y)   \Big|^{2} \df y \df x' .
\end{align*}
At this point we have to estimate the negative integral from above. Note that the linear combination of $v_{j}$ lies in $d[a]$. Therefore we can use the inequality
\begin{align}\label{inequalityproof}
\int_{E^{\beta}} |u|^{2} \df x \ \leq \ \beta^{2} \int_{\Omega} |\nabla_{\mathbb{H}}\, u|^{2}  \df x, \qquad u\in d[a],
\end{align}
which is proved in section \ref{sec-aux}. Assuming for the moment that \eqref{inequalityproof} holds true we get
\begin{align}
& \frac{1}{2\pi} \int_{\mathbb{R}^{2}}  \int_{{E}^{\beta}}  \Big |\sum_{j: \lam<\lambda}  \sprod{\mathrm{P_{k, \xi_{3}}}  (x',\cdot)\,  \exp{-\mathrm{i}  \xi_3 \cdot}}{ v_{j}(\cdot)}_{L^{2}(\Omega)} v_{j}(y)  \Big |^{2} \df y  \df x' \\
& \quad  \leq   \frac{\beta^{2}}{2 \pi} \int_{\mathbb{R}^{2}}  \int_{\Omega}  \Big |\sum_{j: \lam<\lambda}  \sprod{\mathrm{P_{k, \xi_{3}}}  (x',\cdot)\,  \exp{-\mathrm{i}  \xi_3 \cdot}}{v_{j}(\cdot)}_{L^{2}(\Omega)}  \nabla_{\mathbb{H}}\, v_{j}(y)  \Big |^{2} \df y  \df x'.
\end{align}
Integration by parts and the fact that the eigenfunctions  $v_{j}$ are mutually orthogonal then yield
\begin{align*}
& \frac{1}{2\pi} \int_{\mathbb{R}^{2}}  \int_{{E}^{\beta}} \Big |\sum_{j: \lam<\lambda}  \sprod{\mathrm{P_{k, \xi_{3}}}  (x',\cdot)\, \exp{-\mathrm{i}  \xi_3 \cdot}}{ v_{j}(\cdot)}_{L^{2}(\Omega)} v_{j}(y) \Big |^{2} \df y  \df x' \\
& \quad \leq
\frac{\beta^{2}}{2 \pi} \int_{\mathbb{R}^{2}}  \int_{\Omega} \sum_{j: \lam<\lambda}  \lambda_j(\Omega)    \Big | \sprod{\mathrm{P_{k, \xi_{3}}}  (x',\cdot) \, \exp{-\mathrm{i}  \xi_3 \cdot}}{v_{j}(\cdot)}_{L^{2}(\Omega)} v_{j}(y) \Big |^{2}  \df y  \df x'  \\
& \quad \leq \frac{\beta^{2}\, \lambda}{2 \pi}  \int_{\mathbb{R}^{2}}   \sum_{j: \lam<\lambda}    \Big| \sprod{\mathrm{P_{k, \xi_{3}}}  (x',\cdot)\,  \exp{-\mathrm{i}  \xi_3 \cdot}}{v_{j}(\cdot)}_{L^{2}(\Omega)}  \Big |^{2}  \df x' \, .
\end{align*}
Finally we sum over all $j$ and use \eqref{vierzwei} to obtain
\begin{align}
 \int_{\mathbb{R}^{2}}  \sum_{j: \lam<\lambda}  \left| \sprod{\mathrm{P_{k, \xi_{3}}}  (x',\cdot)\, \exp{-\mathrm{i}  \xi_3 \cdot}}{v_{j}(\cdot)}_{L^{2}(\Omega)} \right|^{2}  \df x' \ \leq \ \frac{|\xi_3||\Omega|}{2\pi}\, .
\end{align}
Summarizing these estimates we arrive at the following lower bound on $\mathrm{Q}$:
\begin{align} \label{aux-lb}
 \mathrm{Q}(\lambda,k,\xi_3) \ \geq& \ \beta \frac{|\xi_{3}|}{8\pi^{2}} \frac{|B_{\mathrm{R}(\Omega)}(0)|}{\mathrm{R}(\Omega)}-   \beta^{2}  \frac{|\xi_{3}|}{4 \pi^{2}} |\Omega| \lambda= \beta \frac{|\xi_3|}{8 \pi^{2}} \left(  \frac{|B_{\mathrm{R}(\Omega)}(0)|}{\mathrm{R}(\Omega)}-   2\beta   |\Omega| \lambda \right).
\end{align}
Now we  set
\begin{equation} \label{beta-choice}
\beta \ := \ \frac{|B_{\mathrm{R}(\Omega)}(0)|}{4 |\Omega|\, \mathrm{R}(\Omega)}\  \lambda^{-1}.
\end{equation} 
We have to show that with this choice $\beta \leq \mathrm{R}(\Omega)$ holds true. By \eqref{hl-li-yau} 
\begin{align}
 \frac{1}{\lambda_{1}(\Omega)}\leq \frac{3}{8\sqrt{2}}\,  |\Omega|^{1/2} \leq  |\Omega|^{1/2}.
\end{align}
This inequality in combination with $|B_{\mathrm{R}(\Omega)}(0)|\leq |\Omega|$ yields that for any  $\lambda\geq \lambda_{1}(\Omega)$ we have
\begin{align}
 \beta= \frac{|B_{\mathrm{R}(\Omega)}(0)|}{4 |\Omega|\, \mathrm{R}(\Omega) \lambda} \leq \frac{|B_{\mathrm{R}(\Omega)}(0)|}{|\Omega|\, \mathrm{R}(\Omega) \lambda_{1}(\Omega)} \leq  \frac{|B_{\mathrm{R}(\Omega)}(0)|}{\sqrt{|\Omega|}\ \mathrm{R}(\Omega)}  \leq \frac{\sqrt{|B_{\mathrm{R}(\Omega)}(0)|}}{\mathrm{R}(\Omega)}.
\end{align}
From Proposition \ref{prop}(e) and  the fact that $|B_{1}(0)|\leq 1$, see e.g.~\cite{sachovka}, we thus deduce that 
\begin{align}
\beta \, \leq\, \frac{\sqrt{|B_{\mathrm{R}(\Omega)}(0)|}}{\mathrm{R}(\Omega)}= \mathrm{R}(\Omega)\sqrt{|B_{1}(0)|} \leq \mathrm{R}(\Omega).
\end{align}
as required. Hence we may insert \eqref{beta-choice} into \eqref{aux-lb}, which yields
\begin{align}\label{nebenterm}
 \mathrm{Q}(\lambda,k,\xi_3) \ \geq& \ \frac{|\xi_{3}|}{64 \pi^{2}} \frac{|B_{\mathrm{R}(\Omega)}(0)|^{2}}{\mathrm{R}(\Omega)^{2}|\Omega|}\  \lambda^{-1}.
\end{align}
Finally we estimate the sum of the negative integral of \eqref{vierdrei}
\begin{align}
\trace{\mathrm{A}(\Omega)-\lambda }_{-}\leq \frac{ |\Omega|}{96}\, \lambda^{3}
-  \frac{1}{64\pi^{2}} \frac{|B_{\mathrm{R}(\Omega)}(0)|^{2}}{\mathrm{R}(\Omega)^{2}|\Omega|}\, \lambda^{-1}\int_{\mathbb{R}} \sum_{k=1}^{\infty}( \lambda-|\xi_{3}|(2\mathrm{k}-1) )_{+} \,  |\xi_{3}|  \df \xi_{3}.  
\end{align}
and calculate the integral on the right hand-side by using the substitution $\xi_{3}(2k-1)=s$ and  \eqref{limitofseries}:
\begin{align*}
2 \sum_{\mathrm{k}=1}^{\infty} \int_{0}^{\infty} (\lambda-\xi_{3}(2\mathrm{k}-1))_{+} {\xi_{3}} \df \xi_{3} \ = \ \sum_{\mathrm{k}=1}^{\infty} \frac{2}{(2\mathrm{k}-1)^{2}} \int_{0}^{\infty} s(\lambda-s)_{+}   \df s \ =  \frac{\pi^2 \lambda^3}{24}.
\end{align*}
This together with Proposition \ref{prop}(e) yields inequality \eqref{hauptgleichung}. It thus remains to prove  \eqref{inequalityproof}. 

\subsection{Proof of inequality \eqref{inequalityproof}} 
\label{sec-aux}
Without loss of generality we can assume that $u\in C_{0}^{\infty}(\Omega)$. Note that $\Omega \subset B_{c}(0)$. 
Hence in the coordinates given by the parametrization of the ball $B_{2c}(0)$ and with the help of \eqref{detlemma} we obtain
\begin{align}
\int_{E^{\beta}} |u(x)|^{2} \df x= \int_{0}^{2\pi}\int_{-\frac{\pi}{c}}^{\frac{\pi}{c}}\int_{a-\beta}^{a} |u(t,k,\theta)|^{2} \,  \frac{f(tk)}{k^{4}} \df \theta,
\end{align}
where $f$ is defined in \eqref{ef}. Wa can assume again that $k$ is positive. Otherwise we substitute $k$ by $-k$ and use that $f(\cdot)$ is even. We know that $u(a,k,\theta)=0$ for all $k\in(-\pi/c,\pi/c)$ and $\theta\in[0,2\pi)$. Since $f(\cdot)$ is increasing on $[0,\pi]$ and $|tk|\leq \pi$, see $\eqref{monotonicityyy}$, it easily follows that 
 \begin{align}
 \sup\limits_{a-\beta \leq \tau\leq a} \int_{a-\beta}^{\tau}{f(sk)}\df s  \int_{\tau}^{a} \, \frac{1}{f(sk)} \df s  \ \leq \  \sup\limits_{a-\beta \leq \tau \leq a} (\tau-a+\beta)(a-\tau) =  \frac{\beta^{2}}{4}\, .
\end{align}
In view of \cite[Theorem 1.14]{kufner} we thus conclude with
\begin{align}
 \int_{0}^{2\pi}\int_{-\frac{\pi}{c}}^{\frac{\pi}{c}}\int_{a-\beta}^{a} |u(t,k,\theta)|^{2} \,  \frac{f(tk)}{k^{4}}\df t\! \df k\! \df \theta\, \leq\,  \frac{\beta^{2}}{4} \int_{0}^{2\pi}\int_{-\frac{\pi}{c}}^{\frac{\pi}{c}}\int_{a-\beta}^{a} |\partial_{t}u(t,k,\theta)|^{2} \,  \frac{f(tk)}{k^{4}} \df t\! \df k\! \df \theta.
\end{align}
Moreover, we know that $E^{\beta}\subseteq B_{2c}(0)$. Hence
\begin{align}
  \int_{0}^{2\pi}\int_{-\pi/c}^{\pi/c}\int_{a-\beta}^{a} |\partial_{t}u(t,k,\theta)|^{2}\   \frac{f(tk)}{k^{4}} \df t\! \df k\! \df \theta \leq  \int_{0}^{2\pi}\int_{-\frac{\pi}{c}}^{\frac{\pi}{c}}\int_{0}^{2c} |\partial_{t}u(t,k,\theta)|^{2}\,  \frac{f(tk)}{k^{4}} \df t\! \df k\! \df \theta.
\end{align}
Let us now turn to the coordinate system $(x_1,x_2,x_3)$. Keeping in mind the parametrization \eqref{geod} we get
\begin{align}
\int_{B_{2c}(0)}   \left| \partial_{t}u \right|^{2} \df x = \int_{B_{2c}(0)}   \left| \partial_{x_1}u\, \partial_{t}x_1+ \partial_{x_2} u\, \partial_{t} x_2+ \partial_{x_3}u\, \partial_{t} x_3\right|^{2} \df x.
\end{align}
From the differential equation of the geodesics; $2\partial_{t} x_3(t)=x_2(t) \partial_{t}x_1(t)-\partial_{t}x_2(t)x_1(t)$, it further follows that 
\begin{align}
\int_{B_{2c}(0)}   \left| \partial_{t}u \right|^{2} \df x  = \int_{B_{2c}(0)}   \left|   \partial_{t} x_1 X_1 u+  \partial_t x_2 X_2\, u \right|^{2} \df x.
\end{align}
The cross terms will be estimated with the help of the Cauchy-Schwarz inequality and $2ab\leq a^{2}+b^{2},\ a,b,\in \mathbb{R}$. This gives 
\begin{align}
2\sprod{ \partial_{t}x_1\, X_1 u}{ \partial_{t}x_2 \, X_2 u} \leq \norm{ \partial_{t}x_2 X_1 u}_{L^{2}(B_{2c}(0))}^{2} + \norm{ \partial_{t} x_1 X_2 u }_{L^{2}(B_{2c}(0))}^{2}.
\end{align}
Now we collect all the above estimates  and use the fact that the support of the function $u$ lies in $\Omega$ to arrive at 
\begin{align}
\int_{E^{\beta}}  {|u|^{2}} \df x \leq \beta^{2} \left(\norm{ \partial_{t} x_1 X_1 u}_{L^{2}(\Omega)}^{2} + \norm{ \partial_{t} x_2 X_1u}_{L^{2}(\Omega)}^{2}+\norm{ \partial_{t}x_2 X_2 u }_{L^{2}(\Omega)}^{2} + \norm{ \partial_{t}x_1 X_2 u }_{L^{2}(\Omega)}^{2} \right)
\end{align}
From \eqref{geod} we see that $ \partial_{t}x_1= \sin(kt+\theta)$ and  $ \partial_{t} x_2= \cos(kt+\theta)$, which implies inequality \eqref{inequalityproof} completing thus the proof of Theorem \ref{dreieins}.
\end{proof}

\section{\bf{Improved spectral estimates}}\label{sec-improvement}
We have seen in Theorem \ref{dreieins}, that for a bounded domain $\Omega \subset \mathbb{H}$ we improved the sharp bound for the eigenvalue sum by adding a negative term of the form $-\lambda^{2}\mathcal{C}(\Omega)$, where $\mathcal{C}(\Omega)$ is a positive constant only depending on the geometry of $\Omega$.  The order of $\lambda$  can be improved if we assume the validity of a Hardy inequality with respect to the C-C metric. In particular we introduce

\begin{assumption}\label{Hardyassumption}
Let $\Omega \subset \mathbb{H}$ be a bounded domain. We assume that there exists a constant $c\in[2,\infty)$ independent of $\Omega$ such that
\begin{align} \label{hardy-cc}
\int_{\Omega} \frac{|u(x)|^{2}}{d(x)^{2}} \df x \ \leq \ c^{2} \int_{\Omega} \left( |X_{1}\, u(x)|^{2}+|X_{2}\, u(x)|^{2} \right) \df x
\end{align}
holds for all $u\in C_0^\infty (\Omega)$. 
\end{assumption}
Note that the sharp value of the constant is $c=2$.  Therefor consider the sequence  $g_{\varepsilon}=d(x)^{1/2+\varepsilon}$ and using the Eikonal equation, see \cite[Thm. 3.1]{montiii}, i.e.
\begin{align}
|X_{1} \, d(x)|^{2}+|X_{2}\, d(x)|^{2}=1 \qquad \text{a.e.} \quad x\in\Omega.
\end{align}
It remains to check, that $d$ lies in the domain of the quadratic form in \eqref{einseins}. Since $X_1d(x)$ and $X_2d(x)$ exist almost everywhere on $\Omega$, see \cite{montiii}, an additional application of the Eikonal equation yields that $d$ is weakly differentiable respectively $X_1$ and $X_2$. At that point it can be shown by standard convolution arguments that $d$ can be approximated by $C_0^\infty(\Omega)$ functions. Then, by a direct calculation we obtain
$$
\lim_{\eps\to 0} \frac{\int_{\Omega} \left( |X_{1} \, g_\eps|^{2}+|X_{2}\, g_\eps|^{2} \right) \df x}{\int_{\Omega} \frac{|g_\eps|^{2}}{d(x)^{2}} \df x} \, =\, 4\, ,
$$
which shows that $c=2$ is the best value of the constant in \eqref{hardy-cc}.

\begin{remark}
If $\Omega$ is a $C^{1,1}$ regular boundary, then Assumption \ref{Hardyassumption} holds true, although the constant is unknown, see \cite[Thm. 4.1 and p.120]{danielli}.
\end{remark}

\begin{thm}\label{improvedestimate}
Let $\Omega \subset \mathbb{H}$ be a bounded domain and let 
\begin{align*}
\sigma(\Omega) := \inf\limits_{0<\beta \leq \RR(\Omega)} \frac{|\Omega^\beta|}{\beta}\, .
\end{align*}
Under Assumption {\normalfont\ref{Hardyassumption}} we have 
\begin{align}\label{improveig}
\trace{\mathrm{A}(\Omega)-\lambda }_{-} \ \leq\ \max\left\{0, \frac{ |\Omega| }{96} \,  \lambda^{3}
-  \frac{1+2/c}{96}\sigma(\Omega)^{\frac{2c+2}{c+2}} (4c+4)^{\frac{-2c-2}{c+2}}|\Omega|^{\frac{-1}{1+2/c}} \lambda^{2+\frac{1}{c+2}}  \right\}.
\end{align}
\end{thm}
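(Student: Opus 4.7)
The plan is to revisit the proof of Theorem \ref{dreieins}, keeping its spectral decomposition unchanged but replacing the local estimate \eqref{inequalityproof} by Hardy's inequality \eqref{hardy-cc}. Subsection \ref{sec-leading} depends only on the Fourier/Landau reduction, and hence carries over verbatim to produce
\[
\trace{\mathrm{A}(\Omega)-\lambda}_{-}\ \leq\ \frac{|\Omega|}{96}\lambda^{3}\ -\ \int_{\mathbb{R}}\sum_{k=1}^{\infty}(\lambda-|\xi_{3}|(2k-1))_{+}\,\mathrm{Q}(\lambda,k,\xi_{3})\,\df\xi_{3},
\]
with $\mathrm{Q}$ defined as in \eqref{Q-def}. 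Repeating the steps leading to \eqref{Q-eq} and using $|z-w|^{2}\geq\tfrac{1}{2}|z|^{2}-|w|^{2}$ yields
\[
\mathrm{Q}(\lambda,k,\xi_{3})\ \geq\ \frac{|\xi_{3}|}{8\pi^{2}}|\Omega^{\beta}|\ -\ \frac{1}{2\pi}\int_{\mathbb{R}^{2}}\int_{\Omega^{\beta}}|h_{x'}(y)|^{2}\,\df y\,\df x',
\]
where $h_{x'}$ is the projection of $\mathrm{P}_{k,\xi_{3}}(x',\cdot)\exp{-\mathrm{i}\xi_{3}\cdot}/\sqrt{2\pi}$ onto the span of the eigenfunctions of $\mathrm{A}(\Omega)$ with eigenvalue strictly below $\lambda$. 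In particular $h_{x'}$ lies in the form domain and obeys $a[h_{x'}]\leq\lambda\|h_{x'}\|_{L^{2}(\Omega)}^{2}$.

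The crucial new ingredient is to bound the negative integral via \eqref{hardy-cc} instead of \eqref{inequalityproof}. Since $d(y)<\beta$ on $\Omega^{\beta}$, Hardy combined with the spectral estimate $a[h_{x'}]\leq\lambda\|h_{x'}\|^{2}$ gives $\int_{\Omega^{\beta}}|h_{x'}|^{2}\leq\beta^{2}c^{2}\lambda\|h_{x'}\|^{2}$; interpolating this with the trivial bound $\int_{\Omega^{\beta}}|h_{x'}|^{2}\leq\|h_{x'}\|^{2}$ provides the parametric family
\[
\int_{\Omega^{\beta}}|h_{x'}|^{2}\,\df y\ \leq\ \beta^{\alpha}c^{\alpha}\lambda^{\alpha/2}\|h_{x'}\|_{L^{2}(\Omega)}^{2}
\]
for a suitable range of $\alpha$. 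Averaging in $x'$ via \eqref{vierzwei} and using $|\Omega^{\beta}|\geq\sigma(\Omega)\beta$ directly from the definition of $\sigma(\Omega)$, one arrives at
\[
\mathrm{Q}(\lambda,k,\xi_{3})\ \geq\ |\xi_{3}|\left(\frac{\sigma(\Omega)\beta}{8\pi^{2}}\ -\ \frac{c^{\alpha}\lambda^{\alpha/2}|\Omega|\beta^{\alpha}}{4\pi^{2}}\right).
\]
Optimising over $\beta$ (with a check, analogous to the one after \eqref{beta-choice}, that $\beta_{\star}\leq\mathrm{R}(\Omega)$, the $\max\{0,\cdot\}$ in \eqref{improveig} absorbing the small-$\lambda$ regime where this check fails) and inserting the choice $\alpha=2+2/c$, so that $\alpha/(\alpha-1)=(2c+2)/(c+2)$ and $\alpha/(2(\alpha-1))=(c+1)/(c+2)$, produces
\[
\mathrm{Q}(\lambda,k,\xi_{3})\ \gtrsim\ |\xi_{3}|\,\sigma(\Omega)^{(2c+2)/(c+2)}|\Omega|^{-c/(c+2)}\lambda^{-(c+1)/(c+2)}.
\]
Substituting back, carrying out the $\xi_{3}$-integration, and summing in $k$ via $\sum_{k}(2k-1)^{-2}=\pi^{2}/8$ exactly as in Section \ref{sec-proof}, gives a negative term of order $\lambda^{3-(c+1)/(c+2)}=\lambda^{2+1/(c+2)}$ with precisely the constant displayed in \eqref{improveig}.

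The hard part of this plan is justifying the interpolation for the optimal value $\alpha=2+2/c$: for any finite $c$ this lies strictly above $2$, while a single Hölder interpolation between the Hardy bound and the $L^{2}$-bound only covers $\alpha\in[0,2]$. I would expect the actual proof to extend the admissible range of $\alpha$ either by iterating \eqref{hardy-cc} on the low-eigenmode subspace, or by coupling \eqref{hardy-cc} with a Folland--Stein type Sobolev embedding associated with the sub-Laplacian on the Heisenberg group (whose homogeneous dimension is $4$), which furnishes an additional $L^{p}$-control of $h_{x'}$ that can be combined with Hardy beyond the reach of a single Hölder step.
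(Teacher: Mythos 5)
You have reconstructed the paper's skeleton faithfully: the reduction to \eqref{vierdrei}, the lower bound on $\mathrm{Q}(\lambda,k,\xi_3)$ over $\Omega^{\beta}$ via $|z-w|^{2}\geq \tfrac12 |z|^{2}-|w|^{2}$, the averaging in $x'$ through \eqref{vierzwei}, the bound $|\Omega^{\beta}|\geq \sigma(\Omega)\beta$, and the optimization in $\beta$ with an admissibility check $\beta\leq \mathrm{R}(\Omega)$ are exactly the paper's steps, and your exponent bookkeeping at $\alpha=2+2/c$ reproduces the powers in \eqref{improveig}. The genuine gap is precisely the step you flag yourself: the estimate $\int_{\Omega^{\beta}}|h_{x'}|^{2}\df y\leq c^{\alpha}\beta^{\alpha}\lambda^{\alpha/2}\|h_{x'}\|_{L^{2}(\Omega)}^{2}$ with $\alpha=2+2/c$ does not follow from interpolating the Hardy-based bound ($\alpha=2$) with the trivial bound ($\alpha=0$); a H\"older step only covers $\alpha\in[0,2]$, and with $\alpha\leq 2$ your optimization in $\beta$ returns a remainder of order $\lambda^{2}$, i.e.\ nothing beyond Theorem \ref{dreieins}. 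Since this estimate is the entire new analytic content of Theorem \ref{improvedestimate}, leaving it as a conjecture (with two possible routes sketched) means the proof is not complete.

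The paper closes exactly this gap with Lemma \ref{lemmahardy}: under Assumption \ref{Hardyassumption},
\begin{align*}
\int_{\Omega^{\beta}}|u(x)|^{2}\df x\ \leq\ c^{2+2/c}\,\beta^{2+2/c}\,\norm{\AM(\Omega)u}_{L^{2}(\Omega)}\,\norm{\AM(\Omega)^{1/c}u}_{L^{2}(\Omega)},\qquad u\in\mathrm{Dom}(\AM(\Omega)),
\end{align*}
which, applied to the finite linear combination $h_{x'}$ of eigenfunctions with eigenvalues below $\lambda$ (note that this uses membership in the operator domain, not merely the form domain), yields precisely the factor $c^{2+2/c}\beta^{2+2/c}\lambda^{1+1/c}$ of \eqref{justificationofbetainconvex}. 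The proof of that lemma is not an elementary interpolation: it is a Davies-type boundary-decay argument (the paper cites \cite{davissss}) combining the Hardy inequality \eqref{hardy-cc} with the spectral calculus for fractional powers of $\AM(\Omega)$, and it relies on the eikonal equation $|X_{1}d|^{2}+|X_{2}d|^{2}=1$ a.e.\ together with the fact that $d$ lies in the form domain. This is in the spirit of your first suggestion (an operator-theoretic bootstrap of Hardy on the low-eigenmode subspace); the Folland--Stein embedding plays no role. Note also that the paper verifies the admissibility $\beta\leq \mathrm{R}(\Omega)$ of the optimal choice \eqref{betagiven-2} by applying the same lemma to $v_{1}$, so without that lemma this check remains open as well.
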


\noindent Note that the quantity $\sigma(\Omega)$ is strictly positive because of {Corollary} \ref{corstrip}. For the proof of Theorem \ref{improvedestimate}, we need the following Lemma:
\begin{lemma}\label{lemmahardy}Under Assumption {\normalfont\ref{Hardyassumption}} it holds
\begin{align}
\int_{\Omega^{\beta}} |u(x)|^{2} \df x \leq c^{2+2/c} \beta^{2+2/c} \norm{\AM(\Omega)u}_{L^{2}(\Omega)} \norm{\AM(\Omega)^{1/c}u}_{L^{2}(\Omega)} 
\end{align}
for all $u\in \mathrm{Dom}(A(\Omega))$, where $\Omega^{\beta}:= \{x\in \Omega\, |\,  d(x)<\beta \}$.
\end{lemma}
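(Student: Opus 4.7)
My plan is to combine a pointwise weight replacement, a Cauchy--Schwarz factorization, and two Hardy-type inequalities at different orders.

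First, since $d(x) < \beta$ on $\Omega^{\beta}$, the pointwise bound $\chi_{\Omega^{\beta}}(x) \leq (\beta/d(x))^{2+2/c}$ yields
\[
\int_{\Omega^{\beta}}|u|^{2}\, \df x \;\leq\; \beta^{2+2/c}\int_{\Omega}\frac{|u|^{2}}{d^{2+2/c}}\, \df x,
\]
reducing the task to bounding the weighted integral by $c^{2+2/c}\|\mathrm{A}(\Omega)u\|_{L^{2}}\,\|\mathrm{A}(\Omega)^{1/c}u\|_{L^{2}}$.

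Second, I split the singular weight as $d^{-(2+2/c)} = d^{-2}\cdot d^{-2/c}$ and apply the Cauchy--Schwarz inequality:
\[
\int_{\Omega}\frac{|u|^{2}}{d^{2+2/c}}\, \df x \;\leq\; \|d^{-2}u\|_{L^{2}}\,\|d^{-2/c}u\|_{L^{2}}.
\]
Assumption \ref{Hardyassumption} reads $d^{-2} \leq c^{2}\mathrm{A}(\Omega)$ in the quadratic-form sense, and the L\"owner--Heinz operator monotonicity applied with the admissible exponent $2/c \in [0,1]$ (which is allowed because $c \geq 2$) upgrades this to $d^{-4/c} \leq c^{4/c}\mathrm{A}(\Omega)^{2/c}$, which gives $\|d^{-2/c}u\|_{L^{2}} \leq c^{2/c}\|\mathrm{A}(\Omega)^{1/c}u\|_{L^{2}}$.

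The third and most delicate ingredient is the iterated Hardy (Rellich-type) bound $\|d^{-2}u\|_{L^{2}} \leq c^{2}\|\mathrm{A}(\Omega)u\|_{L^{2}}$ on $\mathrm{Dom}(\mathrm{A}(\Omega))$. I would derive it by taking $u \in C_{0}^{\infty}(\Omega)$ by density, setting $v := d^{-1}u$ (which lies in the form domain because $d$ is bounded away from zero on $\supp u$), and applying Hardy a second time to obtain $\|d^{-2}u\|^{2} = \|d^{-1}v\|^{2} \leq c^{2}\mathrm{a}[v]$. Expanding $\mathrm{a}[d^{-1}u]$ via the Leibniz rule for $X_{1},X_{2}$ produces the main term $\int d^{-2}|\nabla_{\mathbb{H}}u|^{2}\, \df x$, a term $+\|d^{-2}u\|^{2}$ coming from the Eikonal identity $|\nabla_{\mathbb{H}}d|^{2}=1$ of \cite{montiii}, and a cross term which can be rewritten by integration by parts as $-3\|d^{-2}u\|^{2}-\int d^{-3}(\mathrm{A}(\Omega)d)|u|^{2}\, \df x$. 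The $\|d^{-2}u\|^{2}$ contributions are absorbed on the left, and the remaining weighted gradient integral is controlled by $\|\mathrm{A}(\Omega)u\|^{2}$ using Hardy applied to $X_{i}u$. Combining all three ingredients gives the lemma. I expect this third step to be the main obstacle: tracking the constants so that the reappearing $\|d^{-2}u\|^{2}$ is absorbed with overall constant exactly $c^{2}$, together with controlling the sign and magnitude of the term $\int d^{-3}(\mathrm{A}(\Omega)d)|u|^{2}\, \df x$, requires careful use of the Eikonal identity and the geometry of the Carnot-Carath\'eodory distance.
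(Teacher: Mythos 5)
Your first two ingredients are sound: the pointwise bound $\chi_{\Omega^{\beta}}\le(\beta/d)^{2+2/c}$, the Cauchy--Schwarz splitting $d^{-2-2/c}=d^{-2}\cdot d^{-2/c}$, and the Heinz--L\"owner step $\norm{d^{-2/c}u}\le c^{2/c}\norm{\AM(\Omega)^{1/c}u}$ are all legitimate. The proof stands or falls with your third ingredient, the Hardy--Rellich bound $\norm{d^{-2}u}\le c^{2}\norm{\AM(\Omega)u}$, and that is a genuine gap rather than a technicality. Your own expansion produces the term $\int_{\Omega}d^{-3}\,(\AM(\Omega)d)\,|u|^{2}\df x$: under Assumption \ref{Hardyassumption} the only hypotheses are boundedness of $\Omega$ and the Hardy inequality, so $\Delta_{\mathbb{H}}d=-\AM(\Omega)d$ is merely a distribution (the C-C distance to the boundary is only Lipschitz), with no sign, with a positive part that is not controlled by anything at your disposal (it is nontrivial near concave boundary portions and singular on the cut locus), and it is paired against the weight $d^{-3}$; nothing in the Hardy inequality lets you absorb it. Even discarding that term, the bookkeeping cannot deliver the constant $c^{2}$ that your computation requires: ``Hardy applied to $X_{i}u$'' gives $\sum_{i}\int d^{-2}|X_{i}u|^{2}\le c^{2}\sum_{i,j}\norm{X_{j}X_{i}u}^{2}$, and on the Heisenberg group $\sum_{i,j}\norm{X_{j}X_{i}u}^{2}$ differs from $\norm{\AM(\Omega)u}^{2}$ by commutator terms involving $X_{3}u=[X_{2},X_{1}]u$, so the stated constant $c^{2+2/c}$ does not come out. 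Finally, the opening reduction ``w.l.o.g.\ $u\in C_{0}^{\infty}(\Omega)$ by density'' is not available: $C_{0}^{\infty}(\Omega)$ is a form core but in general not an operator core for $\AM(\Omega)$, while the lemma is asserted for every $u\in\mathrm{Dom}(\AM(\Omega))$ with $\norm{\AM(\Omega)u}$ on the right-hand side; for such $u$ it is not even clear that $d^{-2}u\in L^{2}(\Omega)$, which your very first pointwise estimate already presupposes.

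Note also that the global inequality $\int_{\Omega}d^{-4}|u|^{2}\df x\le c^{4}\norm{\AM(\Omega)u}^{2}$ is strictly stronger than the lemma, which only concerns the thin shell $\{d<\beta\}$, and it is not known to follow from Assumption \ref{Hardyassumption} alone. This is precisely why the paper does not argue this way: its proof consists of checking the two hypotheses needed to repeat Davies' boundary-decay theorem \cite[Thm.~4, p.169]{davissss} in the Heisenberg setting, namely the Eikonal identity $|X_{1}d|^{2}+|X_{2}d|^{2}=1$ a.e.\ and the fact that $d$ lies in the form domain; Davies' argument works with bounded weights built from truncations of $d$ and with quadratic-form identities of the type $\Re\sprod{\AM(\Omega)u}{\sigma^{2}u}=a[\sigma u]-\int_{\Omega}\left(|X_{1}\sigma|^{2}+|X_{2}\sigma|^{2}\right)|u|^{2}\df x$, valid for all $u\in\mathrm{Dom}(\AM(\Omega))$, and thus avoids both the Rellich-type inequality and the core problem. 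If you want a self-contained argument, adapt that scheme (truncate the weight at level $\beta$) rather than aim at a global Hardy--Rellich inequality.
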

\begin{proof} 
Since the Eikonal equation still holds for $d$, see \cite{montiii}, and $d$ lies in the domain of the quadratic dorm, which was discussed in that section, the claim is proved in the same way as \cite[Thm.~4, p.169]{davissss}.
\end{proof}

\begin{proof}[Proof of Theorem {\normalfont\ref{improvedestimate}}]
Since $\Omega$ satisfies the assumptions of Theorem \ref{dreieins}, we can follow the proof of the latter. From section \ref{sec-leading}, in particular from equation
\eqref{vierdrei}, we infer that 
\begin{align}\label{vierdrei-bbbb}
\trace{\mathrm{A}(\Omega)-\lambda }_{-} \ \leq\  \frac{ |\Omega| }{96} \,  \lambda^{3}
-\int_{\mathbb{R}} \, \sum_{k=1}^{\infty}\ ( \lambda-|\xi_{3}|(2\mathrm{k}-1) )_{+} \,  \mathrm{Q}(\lambda,k,\xi_3) \df \xi_{3}.  
\end{align}
with $\mathrm{Q}(\lambda,k,\xi_3)$ given by \eqref{Q-def}. For $\beta\in (0, \mathrm{R}(\Omega))$ we consider the set
\begin{align}
\Omega^{\beta}:= \{x\in \Omega\, |\,  d(x)<\beta \}.
\end{align}
Obviously it holds $\Omega^{\beta}\subseteq \Omega$. Hence equations \eqref{Q-eq} and \eqref{eq-zw} imply that 
\begin{align} \label{QQQ-below}
\mathrm{Q}(\lambda,k,\xi_3)  &\geq  \frac{|\xi_{3}|\, |\Omega^\beta|}{8\pi^{2}} - \frac{1}{2\pi} \int_{\mathbb{R}^{2}}  \int_{\Omega^\beta}  \Big | \!\sum_{j: \lam<\lambda}   \sprod{\mathrm{P_{k, \xi_{3}}}  (x',\cdot)\, \exp{-\mathrm{i}  \xi_3 \cdot}}{ v_{j}(\cdot)}_{L^{2}(\Omega)} v_{j}(y)  \Big |^{2} \df y \df x'.
\end{align}
An application of Lemma \ref{lemmahardy} yields
\begin{equation}\label{justificationofbetainconvex}
\begin{split}
&  \int_{\mathbb{R}^{2}} \int_{\Omega^\beta}  \Big | \sum_{j: \lam<\lambda}   \sprod{\mathrm{P_{k, \xi_{3}}}  (x',\cdot)\, \exp{-\mathrm{i}  \xi_3 \cdot}}{ v_{j}(\cdot)}_{L^{2}(\Omega)} v_{j}(y)  \Big |^{2}\! \df y \df x' \\ 
\qquad& \qquad \leq c^{2+2/c}  \beta^{2+2/c} \lambda^{1+1/c} \int_{\mathbb{R}^{2}} \sum_{j: \lam<\lambda}   \Big | \sprod{\mathrm{P_{k, \xi_{3}}}  (x',\cdot)\, \exp{-\mathrm{i}  \xi_3 \cdot}}{ v_{j}(\cdot)}_{L^{2}(\Omega)}\Big |^2 \df x'\\
& \qquad \qquad \qquad \leq c^{2+2/c}  \beta^{2+2/c} \lambda^{1+1/c}   \frac{|\xi_{3}|}{2\pi}|\Omega|.
\end{split}
\end{equation}
For the last inequality we used \eqref{vierzwei}. Inserting this result into \eqref{QQQ-below} and using the definition of $ |\Omega^\beta|$ we find that
\begin{align*}
\mathrm{Q}(\lambda,k,\xi_3)  &\geq  \frac{|\xi_{3}|\, \sigma(\Omega) }{8\pi^{2}} \beta - c^{2+2/c}  \beta^{2+2/c} \lambda^{1+1/c}   \frac{|\xi_{3}|}{4\pi^2}|\Omega|\\
&= \frac{|\xi_{3}|}{8\pi^{2}} \beta \left(\sigma(\Omega) - 2c^{2+2/c}  \beta^{1+2/c} \lambda^{1+1/c} |\Omega| \right)
\end{align*}
holds true uniformly in $k$, where
\begin{align*}
\sigma(\Omega) = \inf\limits_{0<\beta \leq \RR(\Omega)} \frac{|\Omega^\beta|}{\beta}.
\end{align*}
From {Corollary} \ref{corstrip} we know that $\sigma(\Omega)>0$. Hence upon setting
\begin{equation} \label{betagiven-2}
\beta := \sigma(\Omega)^{\frac{1}{1+2/c}} (4+4/c)^{\frac{-1}{1+2/c}} c^{-1-\frac{1}{1+2/c}}  |\Omega|^{\frac{-1}{1+2/c}} \lambda^{\frac{-1-1/c}{1+2/c}}
\end{equation}
we obtain 
\begin{equation} \label{Q-lowerb-2222}
\mathrm{Q}(\lambda,k,\xi_3)  
\ \geq \ \frac{|\xi_{3}|}{4\pi^{2}}{(1+2/c)}\sigma(\Omega)^{\frac{2c+2}{c+2}} (4c+4)^{\frac{-2c-2}{c+2}}|\Omega|^{\frac{-1}{1+2/c}} \lambda^{-1+\frac{1}{c+2}}  \qquad \forall\ k\in\N. 
\end{equation}
However, as in the proof of Theorem \ref{dreieins} we have to verify that $\beta$ given by \eqref{betagiven-2} with $\lambda\geq \lambda_1(\Omega)$ satisfies 
$\beta\leq \RR(\omega)$. An application of Lemma \ref{lemmahardy} for $u=v_{1}$ and $\beta=\RR(\Omega)$ yields $1\leq \RR(\Omega)^{2+2/c}c^{2+2/c} \lambda_{1}(\Omega)^{1+1/c}$. The latter inequality shows that 
\begin{align*}
 \beta^{1+2/c} \ &\leq \ \sigma(\Omega) (4+4/c)^{-1} c^{-2-2/c}  |\Omega|^{-1} \lambda_{1}(\Omega)^{-1-1/c} \\
 & \leq \ \sigma(\Omega) (4+4/c)^{-1} \RR(\Omega)^{2+2/c} |\Omega|^{-1}\\
  & \leq \ (4+4/c)^{-1} \RR(\Omega)^{1+2/c} \ \leq \ \RR(\Omega)^{1+2/c}.
\end{align*}
holds for all $\lambda\geq  \lambda_1(\Omega)^{-1}$ and therefore justifies the choice of $\beta$ in  \eqref{betagiven-2}. 
Inequality \eqref{improveig} now follows by inserting the lower bound \eqref{Q-lowerb-2222} in \eqref{vierdrei-bbbb} and evaluating the integral in $\xi_3$ and then the series in $k$ as in the proof of Theorem \ref{dreieins}.
\end{proof}
\begin{example}
Let us set $\Omega=B_{r}(0):= \{x\in \mathbb{H}\, |\, d(x,0)<r\}$ for $r>0$. In \cite{yang} it was shown, that for $B_{r}(0)$  Assumption \ref{Hardyassumption} holds true with the constant $c=2$. From Theorem \ref{improvedestimate} and the lower estimate for $\sigma(\Omega)$, which is given by Corollary \ref{corstrip}, we obtain
\begin{align}
\trace{\mathrm{A}(\Omega)-\lambda }_{-} \ \leq\ \max\left\{0, \frac{ |\Omega| }{96} \,  \lambda^{3}
-  \frac{1}{2^{7}3^2\sqrt{3}}\, r^{-\frac{3}{2}}\,  |\Omega| \lambda^{2+\frac{1}{4}}  \right\}.
\end{align}
\end{example}

\section*{\bf Aknowledgements}
 H.~K. was supported by the Gruppo Nazionale per Analisi Matematica, la Probabilit\`a e le loro Applicazioni (GNAMPA) of the Istituto Nazionale di Alta Matematica (INdAM).
The support of MIUR-PRIN2010-11 grant for the project  ``Calcolo delle variazioni'' (H.~K.) is also gratefully acknowledged. 

B.~R. was supported by the German Science Foundation through the Research Training Group 1838: \textit{Spectral Theory and Dynamics of Quantum Systems.}

\bibliographystyle{plain}

\bibliography{heisenberg}


\end{document}